\documentclass[12pt]{article}
\usepackage{amsmath,amssymb,amsthm,bm,amscd,color,cases}
\allowdisplaybreaks
\numberwithin{equation}{section}
\usepackage{geometry}
\geometry{left=30mm,right=30mm,top=30mm,bottom=30mm}

\theoremstyle{definition}
\newtheorem{thm}{Theorem}[subsection]
\newtheorem{defn}[thm]{Definition}
\newtheorem{lem}[thm]{Lemma}
\newtheorem{prop}[thm]{Proposition}

\newtheorem{rem}[thm]{Remark}
\newtheorem{ex}[thm]{Example}
\newtheorem{theorem}{Theorem}

\newcommand{\Fg}{\mathfrak{g}}
\newcommand{\Fh}{\mathfrak{h}}

\newcommand{\BC}{\mathbb{C}}
\newcommand{\BR}{\mathbb{R}}

\newcommand{\BZ}{\mathbb{Z}}
\newcommand{\BB}{\mathbb{B}}

\newcommand{\CB}{\mathcal{B}}

\newcommand{\Rwt}{\mathrm{wt}}
\newcommand{\Rhom}{\mathrm{Hom}}
\newcommand{\Rre}{\mathrm{re}}
\newcommand{\Rdist}{\mathrm{dist}}
\newcommand{\Rmax}{\mathrm{max}}
\newcommand{\Rmin}{\mathrm{min}}

\newcommand{\ac}{Acknowledgment}

\newcommand{\pair}{\langle \pi(t),\,\alpha_i^{\vee} \rangle}

\baselineskip=20pt

\title{\bf{Lakshmibai-Seshadri paths for \\ 
hyperbolic Kac-Moody algebras of rank 2}}
\author{Dongxiao Yu}
\date{}
\begin{document}
\maketitle
\begin{center}
Graduate School of Pure and Applied Sciences, University of Tsukuba, 

1-1-1 Tennodai, Tsukuba, Ibaraki 305-8571, Japan

(e-mail: yudongxiao@math.tsukuba.ac.jp)

\end{center}

\begin{abstract}
Let $\Fg$ be a hyperbolic Kac-Moody algebra of rank $2$, and set $\lambda: = \Lambda_1 - \Lambda_2$, where $\Lambda_1, \Lambda_2$ are the fundamental weights for $\Fg$; note that $\lambda$ is neither dominant nor antidominant. Let $\BB(\lambda)$ be the crystal of all Lakshmibai-Seshadri paths of shape $\lambda$. We prove that (the crystal graph of) $\BB(\lambda)$ is connected. Furthermore, we give an explicit description of Lakshmibai-Seshadri paths of shape $\lambda$.
\end{abstract}

\section{Introduction.}

Let $\Fg$ be a symmetrizable Kac-Moody algebra over $\BC$ with $\Fh$ the Cartan subalgebra. We denote by $W$ the Weyl group of $\Fg$. Let $P$ be an integral weight lattice of $\Fg$, $P^+$ the set of dominant integral weights, and $-P^+$ the set of antidominant weights.
In \cite{o4, o5}, Littelmann introduced the notion of Lakshmibai-Seshadri (LS for short) paths of shape $\lambda \in P$, and gave the set $\BB(\lambda)$ of all LS paths of shape $\lambda$ a crystal structure.  
Kashiwara \cite{o12} and Joseph \cite{o10} proved independently that if $\lambda \in P^+$ (resp., $\lambda \in -P^+$), then $\BB(\lambda)$ is isomorphic, as a crystal, to the crystal basis of the integrable highest (resp., lowest) weight module of highest weight (resp., lowest weight) $\lambda$.
Since $\BB(\lambda) = \BB(w\lambda)$ for every $\lambda \in P$ and $w \in W$ by the definition of LS paths, we can easily see that if $\lambda \in P$ satisfies $W\lambda \cap P^+ \neq \emptyset$ (resp., $W\lambda \cap (-P^+) \neq \emptyset$), then $\BB(\lambda)$ is isomorphic, as a crystal, to the crystal basis of an integrable highest (resp., lowest) module.
Here are natural questions: How are the crystal structure of $\BB(\lambda)$ and its relation to the representation theory in the case that $\lambda \in P$ satisfies $W\lambda \cap (P^+ \cup (-P^+)) = \emptyset$ ? 

If $\Fg$ is of finite type, then there is no $\lambda \in P$ such that $W\lambda \cap (P^+ \cup (-P^+)) = \emptyset$; it is well-known that $W\lambda \cap P^+ \neq \emptyset$ for any $\lambda \in P$.
Assume that $\Fg$ is of affine type, and let $c \in \Fh$ be the canonical central element of $\Fg$. Then, $W\lambda \cap (P^+ \cup (-P^+)) = \emptyset$ if and only if $(\lambda \neq 0$, and) $\langle \lambda, c\rangle = 0$. Naito and Sagaki proved in \cite{o8} and \cite{o1} that if $\lambda$ is of the form: $\lambda = m\varpi_i$, where $m \in \BZ_{\geq1}$ and $\varpi_i$ is the level-zero fundamental weight (note that $\langle \varpi_i, c\rangle = 0$), then $\BB(m\varpi_i)$ is isomorphic, as a crystal, to the crystal basis $\CB(m\varpi_i)$ of the extremal weight module of extremal weight $m\varpi_i$ over the quantum affine algebra $U_q(\Fg)$. 
Here, (for an arbitrary symmetrizable Kac-Moody algebra $\Fg$ and an arbitrary integral weight $\lambda \in P$ for $\Fg$,) the extremal weight module of extremal weight $\lambda$ is the integrable $U_q(\Fg)$-module generated by a single element $v_\lambda$ with the defining relation that ``$v_\lambda$ is an extremal weight vector of weight $\lambda$"; this module was introduced by Kashiwara \cite[\S8]{o11} as a natural generalization of integrable highest (or lowest) weight modules, and has a crystal basis $\CB(\lambda)$ (\cite[\S8]{o11}). Then, in \cite{o15}, they determined the crystal structure of $\BB(\lambda)$ for general $\lambda \in P$ such that $\langle \lambda, c\rangle = 0$, and in \cite{o16}, they proved that there exists a canonical surjective (not bijective in general) strict morphism of crystals from $\CB(\lambda)$ onto $\BB(\lambda)$.

So, in this paper, we consider the case where $\Fg = \Fg(A)$ is a hyperbolic Kac-Moody algebra of rank $2$, associated to the generalized Cartan matrix
$$A = \begin{pmatrix}
 2&-a \\
-b&2
\end{pmatrix} \ (a, b \in \BZ_{\geq 0}, ab >4).$$
Let $\Lambda_1, \Lambda_2$ be the fundamental weights for $\Fg$, and set $\lambda: = \Lambda_1 - \Lambda_2$. In Proposition \ref{n24}, we prove that $W\lambda \cap (P^+ \cup (-P^+)) = \emptyset$ if $a, b \geq 2$; in fact, if $a = 1$ (resp., $b = 1$), then $W\lambda \cap P^+ \neq \emptyset$ (resp., $W\lambda \cap (-P^+) \neq \emptyset$); see Remark \ref{n25}. Then we prove the following theorem.

\begin{theorem}
The crystal graph of $\BB(\Lambda_1 - \Lambda_2)$ is connected.
\end{theorem}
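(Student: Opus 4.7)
The plan is to prove connectedness by showing that every LS path $\pi \in \BB(\lambda)$ can be joined to the straight-line path $\pi_\lambda: t \mapsto t\lambda$ by a sequence of Kashiwara root operators. Two structural features of the present setting guide the strategy. First, since $ab > 4$, the Weyl group $W = \langle s_1, s_2 \rangle$ is the infinite dihedral group, whose elements are indexed by alternating words in $s_1, s_2$; the assumption $a, b \geq 2$ additionally ensures (by a direct computation that should accompany Proposition \ref{n24}) that the stabilizer of $\lambda$ in $W$ is trivial, so that $W\lambda$ is in bijection with $W$ and admits an explicit linear enumeration. Second, the pairings $\langle \lambda, \alpha_1^\vee \rangle = 1$ and $\langle \lambda, \alpha_2^\vee \rangle = -1$ mean that exactly one of $f_1, e_2$ is applicable at $\pi_\lambda$, so one can traverse $W\lambda$ by alternating between operators of type $1$ and type $2$ in a predictable pattern.

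As a first step, I would handle the straight-line LS paths $\pi_\mu$ for $\mu \in W\lambda$. Using $s_1\lambda = \lambda - \alpha_1$ and $s_2\lambda = \lambda + \alpha_2$, one can inductively compute $\langle w\lambda, \alpha_i^\vee \rangle$ in terms of $a, b$ and the length of $w$; the signs of these pairings alternate along any reduced expression for $w$, so for each reduced expression $w = s_{i_\ell} \cdots s_{i_1}$, one can prescribe a word in $f_{i_j}$'s and $e_{i_j}$'s that carries $\pi_\lambda$ to $\pi_{w\lambda}$ (passing through non-straight LS paths along the way, via appropriate powers of each operator).

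Next, for a general LS path $\pi = (\tau_1 > \cdots > \tau_s;\, a_0, \ldots, a_s)$ with $s \geq 2$, I would induct on $s$. The function $h_i^\pi(t) = \langle \pi(t), \alpha_i^\vee \rangle$ is piecewise linear with slopes $\langle \tau_j, \alpha_i^\vee \rangle$, and because the dihedral Bruhat order linearly arranges the turning points $\tau_j$, its behavior is tightly controlled. I expect that for a suitable index $i$ and operator ($e_i$ or $f_i$), iterating the operator eventually absorbs the first or the last turning point of $\pi$, producing an LS path with fewer turning points; the induction then closes by appealing to the straight-line case.

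The main obstacle, I anticipate, will be verifying the chain condition and the integrality of the rational coefficients $a_j$ at each stage of the induction. Since $\lambda$ is neither dominant nor antidominant, there is no highest- or lowest-weight element in $\BB(\lambda)$ to serve as a canonical base point; the argument must proceed \emph{sideways} rather than by raising or lowering to an extremal vertex. Moreover, the root operators can temporarily introduce new turning points before destroying old ones, so controlling the combinatorial complexity of intermediate paths is delicate. I expect this difficulty to be overcome by the explicit description of LS paths of shape $\lambda$ promised by the abstract (which should follow this theorem in the paper), from which the action of $e_i, f_i$ can be read off transparently and the required reductions verified directly.
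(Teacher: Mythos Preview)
Your plan could be made to work but is more elaborate than the paper's argument, and it leans on the explicit LS-path description that the paper actually places \emph{after} the connectedness theorem, not before. The paper inducts not on the number $s$ of segments but on the position of $\iota(\pi)$ (or, dually, $\kappa(\pi)$) in the linear Hasse diagram of $W\lambda$. Two ingredients suffice: (a) Littelmann's lemma (Lemma~\ref{n23}) that $e_i^{\mathrm{max}}$ replaces $\iota(\pi)$ by $r_i\iota(\pi)$ whenever $\langle\iota(\pi),\alpha_i^\vee\rangle<0$, with the analogous statement for $f_j^{\mathrm{max}}$ and $\kappa$; and (b) the observation (Proposition~\ref{n22}) that any LS path of shape $\lambda$ having $\lambda$ itself among its directions must already equal $(\lambda;0,1)$, since $|\langle\lambda,\alpha_i^\vee\rangle|=1$ for $i=1,2$ forces the integrality condition in any $\sigma$-chain through $\lambda$ with $0<\sigma<1$ to fail. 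With these in hand one simply walks $\iota(\pi)$ from $x_m\lambda$ down to $x_0\lambda=\lambda$ by iterated $e_i^{\mathrm{max}}$ (or $\kappa(\pi)$ from $y_k\lambda$ down to $\lambda$ by iterated $f_j^{\mathrm{max}}$), and the instant this endpoint reaches $\lambda$ one is finished by (b). Your concern that root operators may temporarily introduce new turning points is thereby sidestepped entirely: no control over $s$ or the intermediate segments is ever needed, your first step treating straight-line paths separately becomes unnecessary, and the explicit description of $\BB(\lambda)$ is a consequence rather than a prerequisite of the connectedness.
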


Our weight $\lambda = \Lambda_1 - \Lambda_2$ can be considered as an analog of the level-zero fundamental weight for a (rank 2) affine Lie algebra. So, in a future work, we will study, as in the affine case, the relation between $\BB(\Lambda_1 - \Lambda_2)$ and the crystal basis $\CB(\Lambda_1 - \Lambda_2)$ of the extremal weight module of extremal weight $\Lambda_1 - \Lambda_2$. In (the proof of $\BB(\lambda) \cong \CB(\lambda)$ for $\lambda \in P^+$ in \cite{o12} and \cite{o10}, and) the proof of $\BB(\varpi_i) \cong \CB(\varpi_i)$ in \cite{o8} and \cite{o1}, the connectedness of these crystals played very important roles. Therefore, Theorem 1 will be strongly related to the representation theory. 

Finally, in the case where $a, b \geq 2$, we give an explicit description of LS paths of shape $\Lambda_1- \Lambda_2$.

\begin{theorem}
Assume that $a, b \geq 2$. An LS path of shape $\lambda = \Lambda_1 - \Lambda_2$ is of the form (i) or (ii):

\noindent(i) $(x_{m+s-1}\lambda, \dots, x_{m+1}\lambda, x_m\lambda ; \sigma_0, \sigma_1, \dots, \sigma_s)$, where $m \geq 0, s \geq 1$, and $0 = \sigma_0 < \sigma_1 < \dots < \sigma_s = 1$ satisfy the condition that $p_{m+s-u} \sigma_u \in \BZ$ for $1 \leq u \leq s-1$.

\noindent(ii) $(y_{m-s+1}\lambda, \dots, y_{m-1}\lambda, y_m\lambda ; \delta_0, \delta_1, \dots, \delta_s)$, where $m \geq s-1, s \geq 1$, and $0 = \delta_0 < \delta_1 < \dots < \delta_s = 1$ satisfy the condition that $q_{m-s+u+1} \delta_u \in \BZ$ for $1 \leq u \leq s-1$.

\noindent Here, the elements $x_m, y_m \in W, m \geq 0,$ are defined in (\ref{eq:eq01}), (\ref{eq:eq02}), and the sequences $\{p_m\}_{m \geq 0}$ and $\{q_m\}_{m \geq 0}$ are defined in (\ref{eq:eq05}), (\ref{eq:eq06}).
\end{theorem}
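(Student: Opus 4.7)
The plan is to reduce the classification of LS paths of shape $\lambda$ to a combinatorial analysis of the orbit $W\lambda$ together with its Bruhat-type partial order $\succeq$. Since $W$ is the infinite dihedral group generated by $s_1, s_2$, and the pairings $\langle \lambda, \alpha_1^\vee\rangle = 1$, $\langle \lambda, \alpha_2^\vee\rangle = -1$ are explicit, the orbit $W\lambda$ splits into the two alternating sub-orbits $\{x_m\lambda\}_{m \geq 0}$ and $\{y_m\lambda\}_{m \geq 0}$ corresponding to reduced expressions beginning with $s_1$ and $s_2$ respectively. I would first compute $x_m\lambda$ and $y_m\lambda$ recursively; the coefficients appearing in these expansions are precisely the integer sequences $p_m, q_m$ defined in (\ref{eq:eq05})-(\ref{eq:eq06}). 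In parallel, I would identify the positive real root $\beta_m^{(x)}$ (resp.\ $\beta_m^{(y)}$) that effects the Bruhat cover $x_m\lambda \leftrightarrow x_{m-1}\lambda$ (resp.\ $y_m\lambda \leftrightarrow y_{m-1}\lambda$), and verify that the coroot pairing $|\langle x_m\lambda, (\beta_m^{(x)})^\vee \rangle|$ equals $p_m$, and analogously for $y$.

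The next step is to show that within each of the sub-orbits $\{x_m\lambda\}$ and $\{y_m\lambda\}$ the order $\succeq$ is a total chain of successive covers. This would mean any $\sigma$-chain between two elements of the $x$-branch must traverse every intermediate cover, and likewise for the $y$-branch. The $\sigma$-chain condition in the definition of an LS path then reduces, in case (i), to the integrality requirement $\sigma_u\langle x_{m+s-u+1}\lambda, (\beta_{m+s-u+1}^{(x)})^\vee\rangle \in \BZ$ at each boundary $\sigma_u$; after re-indexing using an explicit identity between adjacent coroot values, this becomes exactly $p_{m+s-u}\sigma_u \in \BZ$. A symmetric argument on the $y$-branch yields case (ii). The ``if'' direction -- that any data of the form (i) or (ii) is indeed an LS path -- then reduces to a routine verification.

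The main obstacle is the converse: ruling out LS paths whose sequence of directions mixes the two branches. A priori, some $y_k\lambda$ and $x_\ell\lambda$ could still be $\succeq$-comparable via a chain passing through the ``meeting point'' $\lambda$, so the LS condition alone does not a~priori forbid mixed supports. The key observation I plan to exploit is that $|\langle \lambda, \alpha_i^\vee\rangle| = 1$ for $i = 1, 2$, so any $\sigma_u$-chain crossing $\lambda$ forces $\sigma_u \in \BZ$, hence $\sigma_u \in \{0,1\}$; this contradicts $0 < \sigma_u < 1$ at any interior index $u$. Making this rigorous for arbitrary (not just elementary) chains, and tracking what happens at endpoints, will require careful bookkeeping. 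Here the hypothesis $a, b \geq 2$ enters essentially: by Proposition~\ref{n24} it guarantees $W\lambda \cap (P^+ \cup (-P^+)) = \emptyset$, and it ensures that the sequences $p_m, q_m$ grow strictly from $m \geq 1$ onward so that no extra integer coincidences allow a mixed chain to survive. Once the mixed-branch case is excluded, every LS path lies entirely on one sub-orbit, producing exactly the families (i) and (ii).
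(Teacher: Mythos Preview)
Your overall plan mirrors the paper's: compute $W\lambda$ in terms of the sequences $p_m, q_m$ (Lemma~\ref{a10}), show that the Hasse diagram of $W\lambda$ is a single chain through $\lambda$ (Proposition~\ref{n19}), and use $|\langle \lambda, \alpha_i^\vee\rangle| = 1$ to forbid any $\sigma$-chain touching $\lambda$ (Lemma~\ref{n26}), thereby excluding mixed-branch LS paths. Where your plan has a genuine gap is the passage from ``the $x$-branch is a total chain of covers'' to ``the directions $\nu_1 > \cdots > \nu_s$ of an LS path on that branch are indexed by \emph{consecutive} integers''. Knowing that every $\sigma_u$-chain from $\nu_u$ to $\nu_{u+1}$ must traverse all intermediate covers only tells you that, if $\nu_u = x_k\lambda$ and $\nu_{u+1} = x_j\lambda$ with $k - j \geq 2$, then \emph{all} of $\sigma_u p_k, \sigma_u p_{k-1}, \ldots, \sigma_u p_{j+1}$ lie in $\BZ$. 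Nothing in your outline explains why this is impossible for $0 < \sigma_u < 1$; your phrase ``re-indexing using an explicit identity between adjacent coroot values'' suggests these conditions collapse to a single one, but they do not.

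The paper closes this gap with an arithmetic lemma you never mention: $\gcd(p_k, p_{k+1}) = 1$ for all $k \geq 0$, and likewise for $q_k$ (Lemma~\ref{n15}), an immediate consequence of the defining recursion. Coprimality of any two consecutive $p$'s forces a $\sigma$-chain of length $\geq 2$ to have $\sigma \in \BZ$, so every $\sigma$-chain has length exactly $1$ (this is part~(1) of Theorem~\ref{n27}). It follows that $\Rdist(\nu_u, \nu_{u+1}) = 1$ for each $u$, and then the forms (i), (ii) together with the stated integrality conditions $p_{m+s-u}\sigma_u \in \BZ$ and $q_{m-s+u+1}\delta_u \in \BZ$ drop out directly. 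Note that ``strict growth'' of $p_m$, which is what you invoke, does not by itself yield coprimality, and it is coprimality that does the real work here.
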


This paper is organized as follows. In Section $2$, we fix our notation, and recall the definitions and several properties of LS paths. In Section $3$, after showing some lemmas, we give a proof of Theorem 1. In Section $4$, we give the explicit description of LS paths of shape $\lambda = \Lambda_1 - \Lambda_2$ (Theorem 2), after showing some technical lemmas.

\section{Preliminaries.}
\subsection{Hyperbolic Kac-Moody algebra of rank 2.}

Let 
\begin{equation}
\begin{split}
\label{eq:eq07}
A = \begin{pmatrix} 
   2&-a \\
 -b&2
\end{pmatrix},  \ \text{where} \ a,b \in \BZ_{>0} \ \text{and} \ ab > 4,
\end{split}
\end{equation}
be a hyperbolic generalized Cartan matrix of rank $2$. Let $\Fg = \Fg(A)$ be the Kac-Moody algebra  associated to $A$ over $\BC$. Denote by $\Fh$ the Cartan subalgebra of $\Fg$, $\{\alpha_1, \alpha_2\} \subset \Fh^{\ast} : = \Rhom_{\BC}(\Fh, \BC)$ the set of simple roots,  and $\{\alpha_1^{\vee}, \alpha_2^{\vee}\} \subset \Fh$ the set of simple coroots; we set $I = \{1, 2\}$. 
We denote by $W = \langle r_1, \ r_2 \rangle$ the Weyl group of $\Fg$, where $r_i$ is the simple reflection in $\alpha_i$ for $i = 1, 2$; note that $W = \{x_m, y_m \mid m \in \BZ_{\geq 0}\}$, where
\begin{equation}
x_m := 
\label{eq:eq01}
\begin{cases}
(r_2r_1)^k  &\text{if $m = 2k$ with $k \in \BZ_{\geq 0}$},\\
r_1(r_2r_1)^k  &\text{if $m = 2k + 1$ with $k \in \BZ_{\geq 0}$}.
\end{cases} 
\end{equation}
\begin{equation}
y_m := 
\label{eq:eq02}
\begin{cases}
(r_1r_2)^k  &\text{if $m = 2k$ with $k \in \BZ_{\geq 0}$},\\
r_2(r_1r_2)^k  &\text{if $m = 2k + 1$ with $k \in \BZ_{\geq 0}$}.
\end{cases}
\end{equation}
Let $\Delta_{\Rre}^+$ denote the set of positive real roots. 
We see that 
\begin{equation}
\begin{split}
\label{eq:eq08}
\Delta_{\Rre}^+ = \big\{x_l(\alpha_2), \ y_{l+1}(\alpha_1) \mid l \in \BZ_{\text{even} \geq 0} \big\} \sqcup 
\big\{y_l(\alpha_1),  x_{l+1}(\alpha_2) \mid l \in \BZ_{\text{even} \geq 0} \big\},
\end{split}
\end{equation}
where $\BZ_{\text{even} \geq 0}$ denotes the set of even nonnegative integers.
\begin{rem}
In fact, we know from \cite[5.25]{o6} that
$$\Delta_{\Rre}^+ = \left\{c_j\alpha_1 + d_{j+1}\alpha_2 \ \text{and} \ c_{j+1}\alpha_1 + d_j\alpha_2 \mid j \geq 0 \right\},$$
where the sequences $\{c_j\}_{j \geq 0}$ and $\{d_j\}_{j \geq 0}$ are defined by 
\begin{equation*}
\begin{split}
c_0 = d_0 = 0, \ d_1 = c_1 = 1, \ \text{and} \
\begin{cases}
c_{j+2} = a d_{j+1} - c_j, \\
d_{j+2} = b c_{j+1} - d_j.
\end{cases}
\end{split}
\end{equation*}
\end{rem}

\noindent For a positive real root $\beta \in \Delta_{\Rre}^+$, we denote by $\beta^{\vee}$ the dual root of $\beta$, and by $r_{\beta} \in W$ the reflection in $\beta$.



Let $\Lambda_1, \Lambda_2 \in \Fh^{\ast}$ be the fundamental weights for $\Fg$, i.e., $\langle \Lambda_i,  \alpha_j^{\vee}\rangle = \delta_{i, j}$ for $i, j = 1, 2$, and set $P := \BZ\Lambda_1 \oplus \BZ\Lambda_2$.
Let $P^+ := \BZ_{\geq 0}\Lambda_1 + \BZ_{\geq 0}\Lambda_2 \subset P$ be the set of dominant integral weights,  and $-P^+$ the set of antidominant integral weights.

\subsection{Lakshmibai-Seshadri paths.}

Let us recall the definition of Lakshmibai-Seshadri paths from \cite[\S4]{o5}.

\begin{defn}\label{a4}
Let $\lambda \in P$ be an integral weight. For $\mu, \nu \in W\lambda$, we write $\mu \geq \nu$ if there exist a sequence $\mu = \mu_0, \ \mu_1, \ \dots, \ \mu_r = \nu$ of elements in $W\lambda$ and a sequence $\beta_1, \ \beta_2, \ \dots, \ \beta_r$ of positive real roots such that $\mu_k = r_{\beta_k}(\mu_{k-1})$ and $\langle \mu_{k-1}, \beta_k^{\vee}\rangle < 0$ for $k = 1, 2, \ \dots, \ r$. The sequence $\mu_0, \ \mu_1, \ \dots, \ \mu_r$ above is called a chain for $(\mu, \nu)$. If $\mu \geq \nu$, then we define $\mathrm{dist}(\mu, \nu)$ to be the maximal length $r$ of all possible chains for $(\mu, \nu)$. 
\end{defn}

\begin{rem}\label{a9}
Let $\mu, \nu \in W\lambda$ be such that $\mu > \nu$ with $\mathrm{dist}(\mu, \nu) = 1$. Then there exists a unique $\beta \in \Delta_{\text{re}}^+$ such that $r_\beta(\mu) = \nu$.
\end{rem}

Let $\lambda \in P$. The Hasse diagram of $W\lambda$ is, by definition, the $\Delta_{\Rre}^+$-labeled, directed graph with vertex set $W\lambda$, and edges of the following form: $\nu \xrightarrow{\beta} \mu$ for $\mu, \nu \in W\lambda$ and $\beta \in \Delta_{\Rre}^+$ such that $\mu > \nu$ with $\Rdist(\mu, \nu) = 1$ and $\nu = r_{\beta}(\mu)$.



\begin{defn}\label{a5}
Let $\lambda \in P, \ \mu, \nu \in W\lambda$ with $\mu >\nu$, and $0 < \sigma < 1$ a rational number. A $\sigma$-chain for $(\mu, \nu)$ is,  by definition, a decreasing sequence $\mu = \mu_0 \ > \mu_1 \ > \ \cdots \ > \mu_r = \nu$ of elements in $W\lambda$ such that $\mathrm{dist}(\mu_{k - 1}, \mu_k) = 1$ and $\sigma\langle \mu_{k-1}, \beta_k^{\vee} \rangle \in \BZ_{< 0}$ for all $k = 1, \ 2, \ \dots, \ r$, where $\beta_k$ is the unique positive real root such that $\mu_k = r_{\beta_k}(\mu_{k-1})$. 
\end{defn}

\begin{defn}\label{a6}
Let $\lambda \in P$. Let $(\underline{\nu}; \ \underline{\sigma})$ be a pair of a sequence $\underline{\nu}$: $\nu_1 > \nu_2 > \dots > \nu_s$ of elements in $W\lambda$ and a sequence $\underline{\sigma}: 0 = \sigma_0 < \sigma_1 < \dots < \sigma_s = 1$ of rational numbers, where $s \geq 1$. The pair $(\underline{\nu}; \underline{\sigma})$ is called a  Lakshmibai-Seshadri (LS for short) path of shape $\lambda$,  if for each $k = 1, 2, \dots, s-1$, there exists a $\sigma_k$-chain for $(\nu_k, \nu_{k + 1})$. We denote by $\BB(\lambda)$ the set of all LS paths of shape $\lambda$.
\end{defn}

We identify $\pi = (\nu_1, \ \nu_2, \ \dots, \ \nu_s ; \ \sigma_0, \ \sigma_1, \ \dots, \ \sigma_s) \in \BB(\lambda)$ with the following piecewise-linear continuous map $\pi : [0, 1] \rightarrow \BR \otimes_{\BZ} P :$
$$\pi(t) = \sum_{k = 1}^{j - 1} (\sigma_k - \sigma_{k - 1})\nu_k + (t - \sigma_{j-1})\nu_j \ \text{for} \ \sigma_{j - 1} \leq t \leq \sigma_j, \ 1 \leq j \leq s.$$

Now, we endow $\BB(\lambda)$ with a crystal structure as follows ; for the axiom of  crystal, see \cite[Definition 4.5.1]{o9}.
First, we define $\Rwt(\pi) := \pi(1)$ for $\pi \in \BB(\lambda)$; we know from \cite[Lemma 4.5]{o5} that $\pi(1) \in P$. Next, for $\pi \in \BB(\lambda)$ and $i \in I$,  we define
\begin{equation}\label{eq:eq105}
\begin{split}
H_i^{\pi}(t) := \pair \ \text{for} \ t \in [0, 1], \ \ m_i^{\pi} := \mathrm{min}\{H_i^{\pi}(t) \mid t \in [0, 1]\}.
\end{split}
\end{equation}
We know from \cite[\S2.6]{o5} that 
\begin{equation}\label{eq:eq106}
\text{all local minimal values of }H_i^{\pi}(t) \text{ are integers;}
\end{equation}
in particular, $m_i^{\pi}$ is a nonpositive integer, and $H_i^{\pi}(1) - m_i^{\pi}$ is a nonnegative integer. 
We define $e_i\pi$ as follows: If $m_i^{\pi} = 0$, then we set $e_i \pi := \mathbf{0}$, where $\mathbf{0}$ is an extra element not contained in any crystal. If $m_i^{\pi} \leq -1$, then we set
\begin{equation}\label{eq:eq107}
\begin{split}
t_1 &:= \mathrm{min}\{t \in [0, 1] \mid H_i^{\pi}(t) = m_i^{\pi}\}, \\
t_0 &:= \mathrm{max}\{t \in [0, t_1] \mid H_i^{\pi}(t) = m_i^{\pi} + 1\};
\end{split}
\end{equation}
by (2.6), we see that
\begin{equation}\label{eq:eq108}
H_i^{\pi}(t) \ \text{is strictly decreasing on} \ [t_0, t_1].
\end{equation}
We define
\begin{equation*}
\begin{split}
(e_i \pi)(t) := \begin{cases}
\pi(t)  &\text{if $0 \leq t \leq t_0$},\\
r_i(\pi(t) - \pi(t_0)) + \pi(t_0)  &\text{if $t_0 \leq t \leq t_1$},\\
\pi(t) + \alpha_i  &\text{if $t_1 \leq t \leq 1$};
\end{cases}
\end{split}
\end{equation*}
we know from \cite[\S4]{o5} that $e_i\pi \in \BB(\lambda)$.

Similarly, we define $f_i \pi$ as follows :  If $H_i^{\pi}(1) - m_i^{\pi} = 0$, then we set $f_i \pi := \mathbf{0}$. If $H_i^{\pi}(1) - m_i^{\pi} \geq 1$, then we set
\begin{equation}\label{eq:eq109}
\begin{split}
t_0 &:= \mathrm{max}\{t \in [0, 1] \mid H_i^{\pi}(t) = m_i^{\pi}\},\\
t_1 &:= \mathrm{min}\{t \in [t_0, 1] \mid H_i^{\pi}(t) = m_i^{\pi} + 1 \};
\end{split}
\end{equation}
by (2.6), we see that 
\begin{equation}\label{eq:eq110}
H_i^{\pi}(t) \ \text{is strictly increasing on} \ [t_0, t_1].
\end{equation}
We define
 \begin{equation*}
\begin{split}
(f_i \pi)(t) := \begin{cases}
\pi(t)  &\text{if $0 \leq t \leq t_0$},\\
r_i(\pi(t) - \pi(t_0)) + \pi(t_0)  &\text{if $t_0 \leq t \leq t_1$},\\
\pi(t) - \alpha_i  &\text{if $t_1 \leq t \leq 1$};
\end{cases}
\end{split}
\end{equation*}
we know from \cite[\S4]{o5} that $f_i\pi \in \BB(\lambda)$. We set $e_i\mathbf{0} = f_i\mathbf{0} := \mathbf{0}$ for $i \in I$.

Finally, for $\pi \in \BB(\lambda)$ and $i \in I$, we set 
\begin{equation*}
\begin{split}
\varepsilon_i(\pi ) &:= \mathrm{max}\{n \in \BZ_{\geq 0} \mid e_i^{n}\pi \neq \mathbf{0}\},\\
\varphi_i(\pi ) &:= \mathrm{max}\{n \in \BZ_{\geq 0} \mid f_i^{n}\pi \neq \mathbf{0}\}.
\end{split}
\end{equation*}

\begin{thm}[{\cite[\S2 and \S4]{o5}}]\label{n8}
The set $\BB(\lambda)$, together with the maps $\Rwt : \BB(\lambda) \rightarrow P, \ e_i, f_i : \BB(\lambda) \cup \{\mathbf{0}\} \rightarrow \BB(\lambda) \cup \{\mathbf{0}\}, i \in I,$ and $\varepsilon_i, \varphi_i : \BB(\lambda) \rightarrow \BZ_{\geq 0}, i \in I$,
becomes a crystal.
\end{thm}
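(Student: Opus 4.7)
The plan is to verify the axioms of a crystal in the sense of \cite[Definition 4.5.1]{o9} for the sextuple $(\BB(\lambda), \Rwt, e_i, f_i, \varepsilon_i, \varphi_i)$, following the strategy of Littelmann in \cite{o5}. Since this is cited as a known result, my proof would amount to a concise reduction to four self-contained checks.

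The first and main step is to show closure: if $\pi \in \BB(\lambda)$ and $e_i\pi \neq \mathbf{0}$, then $e_i\pi \in \BB(\lambda)$ (and similarly for $f_i$). Using (\ref{eq:eq106}), the minimum $m_i^\pi$ and the value $m_i^\pi+1$ are integers, so the times $t_0, t_1$ defined in (\ref{eq:eq107}) are rational break points of $\pi$, i.e., they lie in the set $\{\sigma_k\}$ refined by finitely many extra rational points on linear segments. Consequently one may rewrite $\pi$ as a pair $(\underline{\nu}; \underline{\sigma})$ with $\sigma_{j_0}=t_0$ and $\sigma_{j_1}=t_1$, and the new path $e_i\pi$ has segments directed by $\nu_k$ for $k \leq j_0$, by $r_i\nu_k$ for $j_0 < k \leq j_1$, and again by $\nu_k$ for $k > j_1$. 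Using (\ref{eq:eq108}), the strict decrease of $H_i^\pi$ on $[t_0,t_1]$ forces $\langle \nu_k, \alpha_i^\vee\rangle < 0$ for $j_0 < k \leq j_1$, and from this one deduces both that the directions remain totally ordered in the sense of Definition \ref{a4} and that the requisite $\sigma_k$-chains persist (the reflection $r_i$ conjugates the connecting reflections and preserves the integrality condition $\sigma_k\langle \mu, \beta^\vee\rangle \in \BZ_{<0}$).

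The second step is the weight identity: from the piecewise definition, $(e_i\pi)(1) = \pi(1)+\alpha_i$, giving $\Rwt(e_i\pi)=\Rwt(\pi)+\alpha_i$; the analogous equation for $f_i$ follows identically. The third step is to verify $\varepsilon_i(\pi) = -m_i^\pi$ and $\varphi_i(\pi) = H_i^\pi(1)-m_i^\pi$, whence $\varphi_i(\pi)-\varepsilon_i(\pi)=H_i^\pi(1)=\langle\Rwt(\pi),\alpha_i^\vee\rangle$. For this one checks by inspecting the definition that $H_i^{e_i\pi}(t) = H_i^\pi(t)$ for $t \leq t_0$ and $t \geq t_1$, while on $[t_0,t_1]$ the new function equals $2(m_i^\pi+1)-H_i^\pi(t)$, so $m_i^{e_i\pi} = m_i^\pi + 1$ and $H_i^{e_i\pi}(1) = H_i^\pi(1)+1$; hence $\varepsilon_i(e_i\pi)=\varepsilon_i(\pi)-1$ and $\varphi_i(e_i\pi)=\varphi_i(\pi)+1$, and iteration shows $e_i^n\pi \neq \mathbf{0}$ precisely for $n \leq -m_i^\pi$.

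The fourth step is the bijectivity axiom $\pi' = f_i\pi \Leftrightarrow \pi = e_i\pi'$. Given $f_i\pi$, the times $t_0,t_1$ appearing in its construction (\ref{eq:eq109}) coincide with the times produced in the construction (\ref{eq:eq107}) when $e_i$ is applied to $f_i\pi$; the piecewise formulas are manifest inverses on the middle piece since $r_i^2=\mathrm{id}$, and cancel $\pm\alpha_i$ on the right piece. Combined with the convention $e_i\mathbf{0}=f_i\mathbf{0}=\mathbf{0}$, this gives the crystal axiom.

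The main obstacle is Step 1, the closure statement, since the LS condition (existence of $\sigma_k$-chains with integral negative pairings) is a delicate combinatorial constraint and one must carefully match the subdivision times $t_0,t_1$ with the given $\sigma$-sequence; the other three steps are essentially formal once the normal form of $e_i\pi$ is in hand.
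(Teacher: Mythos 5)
Your outline is correct and follows essentially the route the paper relies on: the paper offers no proof of its own for this theorem but simply cites \cite[\S2 and \S4]{o5}, and your four steps (closure of $\BB(\lambda)$ under $e_i,f_i$ via the integrality of local minima of $H_i^\pi$, the weight shift by $\pm\alpha_i$, the identities $\varepsilon_i(\pi)=-m_i^\pi$ and $\varphi_i(\pi)=H_i^\pi(1)-m_i^\pi$, and the mutual inverseness of $e_i$ and $f_i$) are exactly the structure of Littelmann's argument. You correctly identify that all the real work sits in the closure step, where one must show $\mathrm{dist}(\nu,r_i\nu)=1$, verify the integrality of the pairings at the new break points $t_0,t_1$, and transport the $\sigma_k$-chains through $r_i$; the remaining steps are the formal computations you describe.
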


For $\pi = (\nu_1, \ \nu_2, \ \dots, \ \nu_s ; \ \sigma_0, \ \sigma_1, \ \dots, \ \sigma_s) \in \BB(\lambda)$, we set $\iota(\pi) := \nu_1 \ \text{and} \ \kappa(\pi) := \nu_s.$
For $\pi \in \BB(\lambda)$ and $i \in I$,  we set $f_i^{\mathrm{max}}\pi := f_i^{\varphi_i(\pi)}\pi$ and  $e_i^{\mathrm{max}}\pi := e_i^{\varepsilon_i(\pi)}\pi$.

\begin{lem}[{\cite[Proposition 4.2]{o4}}, {\cite[Proposition 4.7]{o5}}]\label{n23}
Let $\pi \in \BB(\lambda)$, and $i \in I$. If $\langle \kappa(\pi), \ \alpha_i^{\vee}\rangle > 0$, then $\kappa(f_i^{\mathrm{max}}(\pi)) = r_i\kappa(\pi)$. If $\langle \iota(\pi), \ \alpha_i^{\vee}\rangle < 0$, then $\iota(e_i^{\mathrm{max}}(\pi)) = r_i\iota(\pi)$.
\end{lem}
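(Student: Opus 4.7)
The plan is to prove the first assertion; the second follows by a symmetric argument, replacing $f_i$ by $e_i$ and interchanging the roles of $t=0$ and $t=1$. Write $\pi = (\nu_1, \dots, \nu_s;\sigma_0, \dots, \sigma_s)$, so $\kappa(\pi) = \nu_s$, and recall that $H_i^{\pi}$ is piecewise linear with slope $\langle \nu_k, \alpha_i^{\vee} \rangle$ on $[\sigma_{k-1}, \sigma_k]$. The hypothesis $\langle \nu_s, \alpha_i^{\vee} \rangle > 0$ makes $H_i^{\pi}$ strictly increasing on $[\sigma_{s-1}, 1]$, so $\varphi_i(\pi) = H_i^{\pi}(1) - m_i^{\pi} \geq 1$. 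I would induct on $N := \varphi_i(\pi)$.

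For the base case $N = 1$, the key step is to show that $t_1 = 1$ in (\ref{eq:eq109}). Strict monotonicity on $[\sigma_{s-1}, 1]$ yields $H_i^{\pi}(t) < m_i^{\pi} + 1$ for $t \in [\sigma_{s-1}, 1)$, so it suffices to rule out $H_i^{\pi}(t^{*}) = m_i^{\pi} + 1$ for any $t^{*} \in [t_0, \sigma_{s-1})$. If such a $t^{*}$ existed, the minimum $v$ of $H_i^{\pi}$ on $[t^{*}, \sigma_{s-1}]$ would satisfy $v \leq H_i^{\pi}(\sigma_{s-1}) < m_i^{\pi} + 1$, and also $v > m_i^{\pi}$ because $t_0$ is the last point where $H_i^{\pi}$ attains its global minimum. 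The point where $v$ is achieved would then be a local minimum of $H_i^{\pi}$ on $[0, 1]$ (using strict monotonicity just to the right of $\sigma_{s-1}$ in the edge case that the minimum is attained at $\sigma_{s-1}$ itself), which contradicts the integrality property (\ref{eq:eq106}) since $v \in (m_i^{\pi}, m_i^{\pi} + 1)$. Hence $t_1 = 1$, and the middle clause of the definition of $f_i\pi$ applies on all of $[t_0, 1]$: on the last piece $[\sigma_{s-1}, 1] \subseteq [t_0, 1]$, the derivative becomes $r_i \nu_s$, so $\kappa(f_i\pi) = r_i \nu_s$. Since $\varphi_i(\pi) = 1$ forces $f_i^{\mathrm{max}}\pi = f_i\pi$, this finishes the base case.

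For the inductive step $N \geq 2$, the intermediate value theorem applied to $H_i^{\pi}$ on $[t_0, 1]$, which runs from $m_i^{\pi}$ to $m_i^{\pi} + N \geq m_i^{\pi} + 2$, produces a point in $[t_0, 1)$ at which $H_i^{\pi}$ equals $m_i^{\pi} + 1$, so $t_1 < 1$. The third clause of the definition of $f_i\pi$ then applies on $[t_1, 1]$, where $(f_i\pi)(t) = \pi(t) - \alpha_i$ is merely a translate of $\pi$; hence the slopes on $[t_1, 1]$ are unchanged and $\kappa(f_i\pi) = \nu_s = \kappa(\pi)$. Using the general crystal identity $\varphi_i(f_i\pi) = N - 1$, the inductive hypothesis applies to $f_i\pi$ and gives $\kappa(f_i^{\mathrm{max}}\pi) = \kappa(f_i^{\mathrm{max}}(f_i\pi)) = r_i\kappa(f_i\pi) = r_i\nu_s$.

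The main obstacle is the base case, specifically the structural fact that $H_i^{\pi}$ cannot first hit the level $m_i^{\pi} + 1$ before the last kink $\sigma_{s-1}$; this is exactly where the integrality of local minima (\ref{eq:eq106}) is indispensable. The remainder of the argument is a direct reading of the piecewise formula defining $f_i$.
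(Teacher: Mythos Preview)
Your argument is correct. The paper does not supply its own proof of this lemma; it is stated with attribution to Littelmann (\cite[Proposition 4.2]{o4}, \cite[Proposition 4.7]{o5}) and used as a black box, so there is nothing in the present paper to compare your proof against. Your approach---inducting on $\varphi_i(\pi) = H_i^\pi(1) - m_i^\pi$, and in the base case using the integrality of local minima (\ref{eq:eq106}) to force $t_1 = 1$---is essentially the standard argument that underlies Littelmann's original result.
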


\section{Connectedness of the crystal $\BB(\Lambda_1 - \Lambda_2)$.}
\subsection{An integral weight whose Weyl group orbit does not intersect with neither $P^+$ nor $-P^+$.}

If $\lambda \in P^+$ (resp., $\lambda \in -P^+$), then $\BB(\lambda)$ is isomorphic, as a crystal, to the crystal basis of the integrable highest (resp., lowest) weight module of highest (resp., lowest) weight $\lambda$ (see Kashiwara \cite{o11} and Joseph \cite{o10}). Also, we see by the definition of LS paths that $\BB(w\lambda) = \BB(\lambda)$ for every $\lambda \in P$ and $w \in W$. Hence, if $\lambda \in P$ satisfies $W\lambda \cap P^+ \neq \emptyset \ (\text{resp.,}\ W\lambda \cap (-P^+) \neq \emptyset)$, then $\BB(\lambda)$ is isomorphic to the crystal basis of an integrable highest (resp., lowest) weight module. So,  we focus on the case that $\lambda \in P$ satisfies $W\lambda \cap (P^+ \cup (-P^+)) = \emptyset$. The following proposition gives a ``fundamental'' example of such $\lambda$.

\begin{prop}\label{n24}
Assume that $a, b \neq 1$ in (\ref{eq:eq07}). If $\lambda = \Lambda_1- \Lambda_2$, then $W\lambda \cap (P^+ \cup (-P^+)) = \emptyset$.
\end{prop}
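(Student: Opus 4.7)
The plan is to show that for every $w\in W$, the integral weight $w\lambda$ has its two $(\Lambda_1,\Lambda_2)$-coordinates of strictly opposite signs; this immediately forces $w\lambda\notin P^+\cup(-P^+)$. Since $W=\{x_m,y_m\mid m\geq 0\}$ by (\ref{eq:eq01}) and (\ref{eq:eq02}), it suffices to treat the two families $\{x_m\lambda\}_{m\geq 0}$ and $\{y_m\lambda\}_{m\geq 0}$ separately.

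Write $x_m\lambda=c_m\Lambda_1+d_m\Lambda_2$. From $\alpha_1=2\Lambda_1-b\Lambda_2$ and $\alpha_2=-a\Lambda_1+2\Lambda_2$, the action of the simple reflections on the coordinates is
\[
r_1(c,d)=(-c,\ bc+d),\qquad r_2(c,d)=(c+ad,\ -d).
\]
Combining this with $x_{m+1}=r_1 x_m$ for $m$ even and $x_{m+1}=r_2 x_m$ for $m$ odd, and starting from $(c_0,d_0)=(1,-1)$, I will introduce the bookkeeping
\[
(c_m,d_m)=(A_m,-B_m)\ \text{for $m$ even},\qquad (c_m,d_m)=(-B_m,A_m)\ \text{for $m$ odd},
\]
and verify by induction that the non-negative integers $A_m,B_m$ satisfy $B_m=A_{m-1}$ together with the alternating three-term recurrence
\begin{equation*}
A_{m+1}=\kappa_m A_m-A_{m-1},\qquad \kappa_m=\begin{cases}b,&m\text{ even},\\ a,&m\text{ odd},\end{cases}\qquad A_{-1}=A_0=1.
\end{equation*}

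The remaining task is to show $A_m>0$ for all $m\geq -1$. Setting $\lambda_m:=A_m/A_{m-1}$, the recurrence becomes $\lambda_{m+1}=\kappa_m-1/\lambda_m$, and I will prove $\lambda_m\geq 1$ by induction on $m$: the base case $\lambda_0=1$ is immediate, and the inductive step uses $\kappa_m\geq 2$ (exactly the hypothesis $a,b\geq 2$) together with $1/\lambda_m\leq 1$ to give $\lambda_{m+1}\geq 2-1=1$. Hence $A_m\geq A_{m-1}\geq\cdots\geq 1>0$, so $c_m$ and $d_m$ are nonzero with strictly opposite signs, and $x_m\lambda\notin P^+\cup(-P^+)$. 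The family $\{y_m\lambda\}$ is handled identically: writing $y_m\lambda=e_m\Lambda_1+f_m\Lambda_2$ and noting that $y_{m+1}=r_2 y_m$ for $m$ even while $y_{m+1}=r_1 y_m$ for $m$ odd, the same scheme goes through with the roles of $a$ and $b$ (equivalently of $\Lambda_1$ and $\Lambda_2$) swapped.

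The only real obstacle is choosing the parametrization so that the two-step alternating action of $r_1,r_2$ collapses into a single scalar three-term recurrence; once this is done, positivity is a one-line continued-fraction induction. Observe that the hyperbolic hypothesis $ab>4$ plays no role in this argument — what is used is only $a,b\geq 2$, which is precisely the assumption excluded in Remark \ref{n25}.
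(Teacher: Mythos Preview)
Your proof is correct and follows essentially the same route as the paper. Your sequence $A_m$ is the paper's $p_{m+1}$ (shifted by one index), and your inductive computation of $(c_m,d_m)$ is exactly Lemma~\ref{a10}; the only cosmetic difference is that the paper simply asserts $1=p_0=p_1\le p_2<p_3<\cdots$, whereas you supply the ratio argument $\lambda_{m+1}=\kappa_m-1/\lambda_m\ge 1$ to justify positivity.
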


\begin{rem}\label{n25}
If $a=1$, then we have $y_1(\Lambda_1- \Lambda_2)= r_2(\Lambda_1- \Lambda_2) = \Lambda_2 \in P^+$. If $b = 1$, then we have $x_1(\Lambda_1- \Lambda_2)= r_1(\Lambda_1- \Lambda_2) = -\Lambda_1 \in -P^+$.
\end{rem}

Keep the setting in Proposition \ref{n24}. We define $\{p_m\}_{m \in \BZ_{\geq 0}}$ and $\{q_m\}_{m \in \BZ_{\geq 0}}$ by:
\begin{equation}
\begin{split}
p_0 = p_1 = 1 \ \text{and} \ \label{eq:eq05}p_{m+2} = \begin{cases}
b p_{m+1} - p_m &(\text{if $m$ is even}),\\
a p_{m+1} -p_m &(\text{if $m$ is odd}),
\end{cases}
\end{split}
\end{equation}
\begin{equation}
\begin{split}
q_0 = q_1 = 1 \ \text{and} \ \label{eq:eq06}q_{m+2} = \begin{cases}
a q_{m+1} - q_m &(\text{if $m$ is even}),\\
b q_{m+1} -q_m &(\text{if $m$ is odd}).
\end{cases}
\end{split}
\end{equation}
Then we see that $1 = p_0 = p_1 \leq p_2 < p_3 < \cdots$ and $1 = q_0 = q_1 \leq q_2 < q_3 < \cdots$; note that $p_1 = p_2$ if and only if $b = 2$, and $q_1 = q_2$ if and only if $a = 2$. Proposition \ref{n24} follows immediately from the following lemmas and the fact that $W = \{x_m, y_m \mid m \in \BZ_{\geq 0}\}$ (see (\ref{eq:eq01}) and (\ref{eq:eq02})).

\begin{lem}\label{a10}
Keep the setting in Proposition \ref{n24}. For $m \in \BZ_{\geq 0}$,
\begin{equation}
\begin{split}
\label{eq:eq03}
x_m\lambda =
\begin{cases}
p_{m+1}\Lambda_1 - p_{m}\Lambda_2 &\text{if $m$ is even},\\
-p_{m}\Lambda_1 + p_{m+1}\Lambda_2 &\text{if $m$ is odd},
\end{cases}
\end{split}
\end{equation}
\begin{equation}
\begin{split}
\label{eq:eq04}
y_m\lambda =
\begin{cases}
q_{m}\Lambda_1 - q_{m+1}\Lambda_2 &\text{if $m$ is even},\\
-q_{m+1}\Lambda_1 + q_{m}\Lambda_2 &\text{if $m$ is odd}.
\end{cases}
\end{split}
\end{equation}
\end{lem}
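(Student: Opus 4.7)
The plan is a straightforward induction on $m$, split into the two parities. The only preparatory remarks needed are: (a) writing the simple roots in the basis of fundamental weights, and (b) unfolding the recursive description (\ref{eq:eq01}) of the Weyl group elements into a one-step recursion.

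First, from the generalized Cartan matrix $A$ and the defining property $\langle \Lambda_i,\alpha_j^\vee\rangle=\delta_{ij}$, I read off
\begin{equation*}
\alpha_1 = 2\Lambda_1 - b\Lambda_2, \qquad \alpha_2 = -a\Lambda_1 + 2\Lambda_2.
\end{equation*}
Next, examining (\ref{eq:eq01}), I observe that $x_{m+1}=r_{i(m)}x_m$ with $i(m)=1$ if $m$ is even and $i(m)=2$ if $m$ is odd; indeed, $x_{2k+1}=r_1(r_2r_1)^k=r_1x_{2k}$ and $x_{2(k+1)}=(r_2r_1)^{k+1}=r_2\cdot r_1(r_2r_1)^k=r_2x_{2k+1}$. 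Similarly, (\ref{eq:eq02}) yields $y_{m+1}=r_{j(m)}y_m$ with $j(m)=2$ if $m$ is even and $j(m)=1$ if $m$ is odd.

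With these in hand, I prove (\ref{eq:eq03}) by induction on $m$. The base cases $m=0$ ($x_0=\mathrm{id}$, giving $\lambda=\Lambda_1-\Lambda_2=p_1\Lambda_1-p_0\Lambda_2$) and $m=1$ ($x_1\lambda=r_1\lambda=-\Lambda_1+(b-1)\Lambda_2=-p_1\Lambda_1+p_2\Lambda_2$) are immediate. For the inductive step, suppose $m$ is even and $x_m\lambda=p_{m+1}\Lambda_1-p_m\Lambda_2$; then
\begin{equation*}
x_{m+1}\lambda=r_1 x_m\lambda=(p_{m+1}\Lambda_1-p_m\Lambda_2)-p_{m+1}\alpha_1=-p_{m+1}\Lambda_1+(bp_{m+1}-p_m)\Lambda_2=-p_{m+1}\Lambda_1+p_{m+2}\Lambda_2,
\end{equation*}
using that $m$ is even in (\ref{eq:eq05}). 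The case $m$ odd is analogous: applying $r_2$ to $-p_m\Lambda_1+p_{m+1}\Lambda_2$ produces the coefficient $ap_{m+1}-p_m=p_{m+2}$ on $\Lambda_1$, matching (\ref{eq:eq05}) when $m$ is odd. This closes the induction. The proof of (\ref{eq:eq04}) is identical after interchanging $(a,b)$ and $(r_1,r_2)$ and using the recursion (\ref{eq:eq06}).

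There is no real obstacle here; the only thing to be careful about is keeping the parity bookkeeping straight, since the two cases of the recursion for $p_m$ (and $q_m$) exactly match the two cases arising from alternating applications of $r_1$ and $r_2$, and the choice of which simple reflection appears at step $m$ is dictated by the parity of $m$, which in turn determines whether the coefficient $bp_{m+1}$ or $ap_{m+1}$ appears. Once the one-step recursion $x_{m+1}=r_{i(m)}x_m$ is isolated, the inductive computation is mechanical.
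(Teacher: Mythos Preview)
Your proof is correct and follows essentially the same approach as the paper: induction on $m$, with the inductive step computing $x_{m+1}\lambda=r_{i}x_m\lambda$ and identifying the new coefficient via the recursion (\ref{eq:eq05}). Your write-up is slightly more explicit (you record $\alpha_i$ in the $\Lambda$-basis and isolate the one-step recursion $x_{m+1}=r_{i(m)}x_m$), but the argument is the same.
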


\begin{proof}
We give a proof only for (\ref{eq:eq03}); the proof for (\ref{eq:eq04}) is similar. We show (\ref{eq:eq03}) by induction on $m$. If $m= 0$ or $m= 1$, then (\ref{eq:eq03}) is obvious. Assume that $m > 1$. If $m$ is even, then
\begin{equation*}
\begin{split}
x_{m+1}\lambda &= r_1(x_m\lambda)
= r_1(p_{m+1}\Lambda_1 - p_m\Lambda_2)
= -p_{m+1}\Lambda_1 + (b p_{m+1} - p_m)\Lambda_2.
\end{split}
\end{equation*}
Since $m$ is even, we have $b p_{m+1} - p_{m} = p_{m+2}$ by the definition (\ref{eq:eq05}). Therefore, we obtain
$x_{m+1}\lambda = -p_{m+1}\Lambda_1 + p_{m+2}\Lambda_2$, as desired.
If $m$ is odd, then
\begin{equation*}
\begin{split}
x_{m+1}\lambda = r_2(x_m \lambda)
= r_2(-p_{m}\Lambda_1 + p_{m+1}\Lambda_2)
= (a p_{m+1} - p_m)\Lambda_1 - p_{m+1}\Lambda_2.
\end{split}
\end{equation*}
Since $m$ is odd, we have $a p_{m+1} - p_{m} = p_{m+2}$ by the definition (\ref{eq:eq06}). Therefore, we obtain
$x_{m+1}\lambda= p_{m+2}\Lambda_1 - p_{m+1}\Lambda_2$, as desired.
\end{proof}

\subsection{Connectedness.}

\begin{thm}\label{a7}
The crystal graph of $\BB(\Lambda_1 - \Lambda_2)$ is connected.
\end{thm}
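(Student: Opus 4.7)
The plan is to show that every LS path in $\BB(\lambda)$ (with $\lambda = \Lambda_1 - \Lambda_2$) lies in the same connected component as the distinguished straight path $\pi_\lambda := (\lambda; 0, 1)$. I would split the argument into two steps: first, connecting all straight paths $\{(\nu; 0, 1) : \nu \in W\lambda\}$ to $\pi_\lambda$; second, reducing an arbitrary LS path to a straight path.

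For step one, a direct computation from the definitions shows that for a straight path $(\nu; 0, 1)$ the function $H_i^{(\nu;0,1)}(t) = t\langle \nu, \alpha_i^\vee\rangle$ is linear, so iterating $f_i$ produces a two-segment path at each intermediate stage until after exactly $\langle\nu, \alpha_i^\vee\rangle$ applications the path collapses back into the straight path $(r_i\nu; 0, 1)$. Hence $f_i^{\mathrm{max}}(\nu; 0, 1) = (r_i\nu; 0, 1)$ whenever $\langle\nu, \alpha_i^\vee\rangle > 0$, and dually $e_i^{\mathrm{max}}(\nu; 0, 1) = (r_i\nu; 0, 1)$ whenever $\langle\nu, \alpha_i^\vee\rangle < 0$. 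By Lemma \ref{a10}, each pairing $\langle\nu, \alpha_i^\vee\rangle$ for $\nu \in W\lambda$ equals $\pm p_m$ or $\pm q_m$, so it is never zero. Writing $\nu = r_{i_k}\cdots r_{i_1}\lambda$ and applying the appropriate operator at each stage, I can therefore build an explicit chain of straight paths connecting $(\nu; 0, 1)$ to $\pi_\lambda$.

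For step two, I would induct on the number $s$ of segments of $\pi = (\nu_1, \dots, \nu_s; \sigma_0, \dots, \sigma_s)$, the base case $s = 1$ being trivial. For $s \geq 2$, I would exploit Lemma \ref{n23}: since $\kappa(\pi) \in W\lambda$ always has some $\langle \kappa(\pi), \alpha_i^\vee\rangle > 0$, applying $f_i^{\mathrm{max}}$ is possible and sends $\kappa(\pi)$ to $r_i\kappa(\pi)$; symmetrically, $e_i^{\mathrm{max}}$ can be used to move $\iota(\pi)$ to $r_i\iota(\pi)$ for the appropriate $i$. The aim is to choose a finite sequence of such operators that merges the last segment of $\pi$ with the penultimate one (or symmetrically at the start), yielding a path with strictly fewer than $s$ segments, to which induction applies.

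The main obstacle is establishing this inductive step. Lemma \ref{n23} only controls the first or last direction of the transformed path and says nothing about its internal segmentation, so justifying the promised collapse requires a finer analysis of how $f_i^{\mathrm{max}}$ and $e_i^{\mathrm{max}}$ reshape the $\sigma_k$-chains inside $\pi$. I expect the technical lemmas preceding the proof to supply precisely this control. The rigidity of the rationality condition $\sigma_k\langle \mu_{k-1}, \beta_k^\vee\rangle \in \BZ_{<0}$ in Definition \ref{a5}, together with the two-chain structure of $W\lambda$ furnished by Lemma \ref{a10} (the orbits $\{x_m\lambda\}_{m \geq 0}$ and $\{y_m\lambda\}_{m \geq 0}$) and the rapid growth of $\{p_m\}$, $\{q_m\}$ under the hyperbolic condition $ab > 4$, should make this combinatorial reduction tractable by severely restricting which pairs $(\nu_{k-1}, \nu_k)$ can appear consecutively in any LS path.
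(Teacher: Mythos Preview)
Your step one is correct but ultimately unnecessary, and the gap you identify in step two is real---but the paper does not fill it the way you expect. Rather than inducting on the number $s$ of segments and trying to merge them, the paper inducts on the index $m$ such that $\iota(\pi) = x_m\lambda$ (or, on the other side of the orbit chain, on the index $k$ such that $\kappa(\pi) = y_k\lambda$). The inductive step is exactly your use of Lemma~\ref{n23}: the appropriate $e_i^{\Rmax}$ sends $\iota(\pi) = x_m\lambda$ to $x_{m-1}\lambda$, and the appropriate $f_j^{\Rmax}$ sends $\kappa(\pi) = y_k\lambda$ to $y_{k-1}\lambda$, with no need to know what happens to $s$ along the way.

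The key technical input is not a segment-collapse statement but Proposition~\ref{n22} (resting on Lemma~\ref{n26} and the Hasse diagram of Proposition~\ref{n19}): if \emph{any} direction $\nu_u$ of an LS path equals $\lambda$, then already $s = 1$ and $\pi = (\lambda; 0, 1)$. This is precisely the integrality obstruction you allude to, applied at the single weight $\lambda$: since $\langle \lambda, \alpha_1^\vee\rangle = 1$ and $\langle \lambda, \alpha_2^\vee\rangle = -1$, no $\sigma$-chain with $0 < \sigma < 1$ can begin at, end at, or pass through $\lambda$. Hence once $\iota(\pi)$ has been driven down to $x_0\lambda = \lambda$ (or $\kappa(\pi)$ to $y_0\lambda = \lambda$), the path is automatically $\pi_\lambda$ and the induction terminates. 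Your segment-merging plan could in principle be carried out, but it would require the explicit description of LS paths (Theorem~\ref{n27}) and the operator formulas of \S4.2; by shifting the induction parameter from $s$ to the distance of an endpoint from $\lambda$, the paper's route is considerably lighter.
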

If $a = 1$ or $b = 1$, then $\BB(\Lambda_1 - \Lambda_2)$ is connected by Remark \ref{n25}, together with the argument preceding  Proposition \ref{n24}. Therefore, in what follows, we assume that $a, b \neq 1$. 
In order to prove Theorem \ref{a7} in this case, we need some lemmas; we set $\lambda = \Lambda_1 - \Lambda_2$.

\begin{lem}\label{a11} 
Let $m \in \BZ_{\geq 0}$,  and $\beta \in \Delta_{\text{re}}^+$.

\noindent(1) Assume that $m$ is even. Then, $\langle x_m\lambda, \ \beta^{\vee}\rangle \in \BZ_{<0}$ if and only if $\beta = x_l(\alpha_2)$ or $y_{l+1}(\alpha_1)$ for some $l \in \BZ_{\text{even} \geq 0}$.

\noindent(2) Assume that $m$ is odd. Then, $\langle x_m\lambda, \ \beta^{\vee}\rangle \in \BZ_{<0}$ if and only if $\beta = y_l(\alpha_1)$ or $x_{l+1}(\alpha_2)$ for some $l \in \BZ_{\text{even} \geq 0}$.
\end{lem}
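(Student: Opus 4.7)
The plan is to compute $\langle x_m\lambda,\beta^\vee\rangle$ by reducing to a pairing with a simple coroot via $W$-invariance of $\langle\cdot,\cdot\rangle$, and then quoting Lemma \ref{a10}. Writing $\beta = w(\alpha_i) \in \Delta_{\Rre}^+$ with $w\in W$ and $i\in I$, one has $\beta^\vee = w(\alpha_i^\vee)$, so
\[
\langle x_m\lambda,\ \beta^\vee\rangle \;=\; \langle w^{-1}x_m\lambda,\ \alpha_i^\vee\rangle.
\]
By Lemma \ref{a10}, the right-hand side is a sign times some $p_n$ or $q_n$, and both sequences consist of strictly positive integers. Hence it suffices to identify $w^{-1}x_m$ as an $x_n$ or $y_n$ and to record its parity.

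To make this identification I use that $ab>4$ forces $W$ to be the infinite dihedral group $\langle r_1,r_2\rangle$. With $s := r_2r_1$ one has the moves $r_1 s = s^{-1}r_1$ and $r_2 s = s^{-1}r_2$, together with
\[
x_{2k} = s^k,\quad x_{2k+1} = r_1 s^k,\quad y_{2k} = s^{-k},\quad y_{2k+1} = r_2 s^{-k}\qquad(k\ge 0).
\]
In particular $x_l^{-1}$ equals $y_l$ or $x_l$ according as $l$ is even or odd, and symmetrically for $y_l^{-1}$, so each product $w^{-1}x_m$ arising below is evaluated by at most one dihedral move.

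I would then do the case analysis, split according to the four families of $\beta$ in (\ref{eq:eq08}) and the parity of $m$. As a representative case, suppose $m$ is even and $\beta = x_l(\alpha_2)$ with $l$ even. Then $w^{-1} = x_l^{-1} = y_l$ and $y_l x_m = s^{(m-l)/2}$, which is $x_{m-l}$ if $m\ge l$ and $y_{l-m}$ if $m<l$; both have even index, and Lemma \ref{a10} returns $-p_{m-l}$ or $-q_{l-m+1}$, confirming (1) for this family. For $\beta = y_{l+1}(\alpha_1)$ with $l$ even, one finds $y_{l+1}x_m$ is one of $y_1,\,x_{m-l-1},\,y_{l-m+1}$ (according to the sign of $m-l$), all of odd index, and Lemma \ref{a10} again yields a negative pairing with $\alpha_1^\vee$. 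For $\beta = y_l(\alpha_1)$ or $x_{l+1}(\alpha_2)$ with $l$ even, the products simplify to $x_{l+m}$ or $x_{l+m+1}$, giving the positive values $p_{l+m+1}$ or $p_{l+m+2}$. Part (2) is parallel, with the roles of the two families of $\beta$ exchanged.

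No step is conceptually difficult; the main obstacle is organizational bookkeeping, since the eight combinations of (family of $\beta$, parity of $m$) each split further according to whether $m\ge l$ or $m<l$, and the boundary case $m=l$ (where $w^{-1}x_m$ collapses to a length-zero or length-one word such as the identity, $r_1$, or $r_2$) must be verified separately before invoking Lemma \ref{a10}.
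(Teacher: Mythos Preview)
Your approach is correct and is essentially the same as the paper's: both reduce $\langle x_m\lambda,\beta^\vee\rangle$ to $\langle w^{-1}x_m\lambda,\alpha_i^\vee\rangle$ by $W$-invariance, identify $w^{-1}x_m$ as some $x_n$ or $y_n$ using the dihedral structure of $W$, and then read off the sign from Lemma~\ref{a10}. The paper's write-up is terser (it does one representative subcase of each direction and says ``similarly'' for the rest), whereas you sketch the full case breakdown; your worry about the boundary $m=l$ is unnecessary, since those cases are already covered by the general formulas (e.g.\ $x_0=y_0=e$ and $-p_0=-q_1=-1$), but this is harmless caution rather than a gap.
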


\begin{proof}
We give a proof only for part (1); the proof for part (2) is similar. First we show the ``if'' part of part (1). Let $l \in \BZ_{\text{even} \geq 0}$. We have $\langle x_m\lambda, x_l(\alpha_2^{\vee})\rangle = \langle x_l^{-1}x_m\lambda, \ \alpha_2^{\vee}\rangle$. Here, if $m \geq l$ (resp., $m \leq l$), then $x_l^{-1}x_m$ is equal to $x_{m-l}$ (resp., $y_{l-m}$). Therefore, by (\ref{eq:eq03}), we have $\langle x_m\lambda, x_l(\alpha_2^{\vee})\rangle = -p_{m-l} \in \BZ_{<0}$ (resp., $= -q_{l-m+1} \in \BZ_{<0}$). Similarly, we can show that $\langle x_m\lambda, y_{l+1}(\alpha_1^{\vee})\rangle < 0$.

Next, we show that the ``only if '' part of part (1); by (\ref{eq:eq08}), it suffices to show that if $\beta = x_{l+1}(\alpha_2)$ or $y_l(\alpha_1)$ for $l \in \BZ_{\text{even} \geq 0}$, then $\langle x_m\lambda, \beta^{\vee}\rangle > 0$. We have $\langle x_m\lambda, x_{l+1}(\alpha_2^{\vee}) \rangle = \langle x_{l+1}^{-1}x_m\lambda, \alpha_2^{\vee} \rangle = \langle x_{m+l+1}\lambda, \alpha_2^{\vee} \rangle$. By (\ref{eq:eq03}), we have $\langle x_m\lambda, x_{l+1}(\alpha_2^{\vee})\rangle = p_{m+l+2}>0$. Similarly, we can show that $\langle x_m\lambda, y_l(\alpha_1^{\vee})\rangle > 0$. This completes the proof of the lemma.
\end{proof}

The next lemma can be shown in exactly the same way as Lemma \ref{a11}.

\begin{lem}\label{a12}
Let $m \in \BZ_{\geq 0}$ and $\beta \in \Delta_{\text{re}}^+$.

\noindent(1) Assume that $m$ is even. Then, $\langle y_m\lambda, \ \beta^{\vee}\rangle \in \BZ_{<0}$ if and only if $\beta = x_l(\alpha_2)$ or $y_{l+1}(\alpha_1)$ for some $l \in \BZ_{\text{even} \geq 0}$.

\noindent(2) Assume that $m$ is odd. Then, $\langle y_m\lambda, \ \beta^{\vee}\rangle \in \BZ_{<0}$ if and only if $\beta = y_l(\alpha_1)$ or $x_{l+1}(\alpha_2)$ for some $l \in \BZ_{\text{even} \geq 0}$.
\end{lem}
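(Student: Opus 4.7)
My plan is to follow the proof of Lemma \ref{a11} essentially verbatim, replacing $x_m$ by $y_m$ throughout and invoking (\ref{eq:eq04}) in place of (\ref{eq:eq03}) (and vice versa) wherever the Weyl group element produced by a reduction lies in the other family. Since $W$ is the infinite dihedral group generated by $r_1, r_2$, any product of the generators $x_j^{\pm 1}$, $y_k^{\pm 1}$ immediately simplifies, via the formulas (\ref{eq:eq01}), (\ref{eq:eq02}), to a single $x_\bullet$ or $y_\bullet$; this reduction is the combinatorial engine of both lemmas.

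For the ``if'' part of (1), fix $m$ even and $l \in \BZ_{\text{even} \geq 0}$. I would rewrite $\langle y_m \lambda, x_l(\alpha_2^{\vee})\rangle = \langle x_l^{-1} y_m \lambda, \alpha_2^{\vee}\rangle$; from (\ref{eq:eq01}), (\ref{eq:eq02}) one verifies $x_l^{-1} y_m = y_{l+m}$, and then (\ref{eq:eq04}) yields the value $-q_{l+m+1}$, which is strictly negative since $q_j \geq 1$ for all $j \geq 0$. Similarly, $\langle y_m \lambda, y_{l+1}(\alpha_1^{\vee})\rangle = \langle y_{l+1}^{-1} y_m \lambda, \alpha_1^{\vee}\rangle$; reducing $y_{l+1}^{-1} y_m$ in $W$ (splitting on whether $l+1 \leq m$ or $l+1 > m$) produces an $x_\bullet$ or $y_\bullet$ whose pairing with $\alpha_1^{\vee}$, read off from (\ref{eq:eq03}) or (\ref{eq:eq04}), comes out as $-p_\bullet$ or $-q_\bullet < 0$.

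For the ``only if'' part of (1), by (\ref{eq:eq08}) the complement of the above set of roots inside $\Delta_{\text{re}}^+$ consists of $y_l(\alpha_1)$ and $x_{l+1}(\alpha_2)$ with $l \in \BZ_{\text{even} \geq 0}$, so it suffices to show that for these $\beta$ the pairing $\langle y_m \lambda, \beta^{\vee}\rangle$ is positive. The same mechanism applies: reduce $y_l^{-1} y_m$ and $x_{l+1}^{-1} y_m$ in $W$ to $x_\bullet$ or $y_\bullet$ (again with a subcase on the relative sizes of the indices), and then (\ref{eq:eq03}), (\ref{eq:eq04}) produce $+p_\bullet$ or $+q_\bullet > 0$.

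Part (2) is obtained by repeating the argument with $m$ odd, which is precisely what flips the parity of the intermediate elements $x_\bullet$, $y_\bullet$ and thereby swaps the classification of negative-pairing roots to $y_l(\alpha_1)$ and $x_{l+1}(\alpha_2)$. The only mild obstacle is the bookkeeping needed to simplify products such as $x_l^{-1} y_m$ and $y_{l+1}^{-1} y_m$ in the infinite dihedral group, splitting into cases based on which index is larger; but this is the precise counterpart of the reduction $x_l^{-1} x_m \in \{x_{m-l}, y_{l-m}\}$ already carried out in the proof of Lemma \ref{a11}, so no genuinely new difficulty arises.
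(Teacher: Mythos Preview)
Your proposal is correct and matches the paper's approach exactly: the paper simply states that Lemma~\ref{a12} ``can be shown in exactly the same way as Lemma~\ref{a11},'' and you have carried out precisely that transposition, pairing with $\alpha_i^{\vee}$ after reducing $w^{-1}y_m$ in the infinite dihedral group and reading off the sign from (\ref{eq:eq03}) or (\ref{eq:eq04}). One small remark: in the ``if'' part of (1) for $\beta = y_{l+1}(\alpha_1)$, no case split is actually needed, since $y_{l+1}$ and $y_m$ share no common prefix (they begin with $r_2$ and $r_1$ respectively), so $y_{l+1}^{-1}y_m = y_{l+m+1}$ outright; the genuine splits occur only in the ``only if'' part, just as in Lemma~\ref{a11}.
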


\begin{lem}\label{a13}
(1) For $m \in \BZ_{\geq 1}$, we have $x_m\lambda > x_{m-1}\lambda$ with $\Rdist(x_m\lambda, x_{m-1}\lambda) = 1$. And $r_ix_m\lambda = x_{m-1}\lambda$, where $i = \begin{cases}
2 &(\text{if $m$ is even}),\\
1 &(\text{if $m$ is odd}).
\end{cases}$

\noindent(2) For $m \in \BZ_{\geq 1}$, we have $y_{m-1}\lambda > y_m\lambda$ with $\Rdist(y_{m-1}\lambda, y_m\lambda) = 1$. And $r_j y_m\lambda = y_{m-1}\lambda$, where $j= \begin{cases}
1 &(\text{if $m$ is even}),\\
2 &(\text{if $m$ is odd}).
\end{cases}$
\end{lem}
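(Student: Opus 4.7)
The plan is to prove part (1); part (2) then follows by the verbatim argument after swapping $p_m \leftrightarrow q_m$ and the roles of $r_1, r_2$. First I would read off (\ref{eq:eq01}) to see that $x_m = r_i\,x_{m-1}$ for the index $i$ specified in the statement, since $r_i$ is the leftmost letter of the alternating reduced expression for $x_m$; applying both sides to $\lambda$ gives $r_i(x_m\lambda) = x_{m-1}\lambda$. Plugging into the formulas (\ref{eq:eq03}) of Lemma \ref{a10} yields $\langle x_m\lambda,\,\alpha_i^\vee\rangle = -p_m$, which is strictly negative since $p_m \geq 1$. So the pair $(x_m\lambda,\,x_{m-1}\lambda)$ with $\beta_1 = \alpha_i$ already constitutes a chain of length $1$ in the sense of Definition \ref{a4}, proving simultaneously that $x_m\lambda > x_{m-1}\lambda$ and that $\Rdist(x_m\lambda, x_{m-1}\lambda) \geq 1$.

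The main obstacle is the upper bound. My strategy exploits the rigid identity
\[
x_{m-1}\lambda - x_m\lambda \;=\; -\langle x_m\lambda, \alpha_i^\vee\rangle\,\alpha_i \;=\; p_m\,\alpha_i,
\]
which says that the weight gap is a positive integer multiple of a \emph{single} simple root. Suppose for contradiction that some chain $x_m\lambda = \mu_0 > \mu_1 > \cdots > \mu_r = x_{m-1}\lambda$ of length $r \geq 2$, with defining positive real roots $\beta_1, \ldots, \beta_r$, existed. Telescoping the equations $\mu_k - \mu_{k-1} = -\langle \mu_{k-1}, \beta_k^\vee\rangle\,\beta_k$ (all of whose coefficients lie in $\BZ_{\geq 1}$) would express $p_m\,\alpha_i$ as a positive integer combination of the $\beta_k$'s. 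Since every positive real root lies in $\BZ_{\geq 0}\alpha_1 + \BZ_{\geq 0}\alpha_2$, and the only positive real roots with a vanishing simple coefficient are $\alpha_1$ and $\alpha_2$ themselves, comparing the coefficient of $\alpha_{3-i}$ on both sides forces $\beta_k = \alpha_i$ for every $k$. But then $\mu_1 = r_i(\mu_0)$ yields $\langle \mu_1, \alpha_i^\vee\rangle = -\langle \mu_0, \alpha_i^\vee\rangle = p_m > 0$, contradicting the requirement $\langle \mu_1, \beta_2^\vee\rangle < 0$ needed to continue the chain. Hence $r = 1$, so $\Rdist(x_m\lambda, x_{m-1}\lambda) = 1$.

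The only step with any real content is the ``rigid gap'' argument in the second paragraph; the rest is routine substitution into Lemma \ref{a10} and the definitions in Section 2. It is worth emphasizing that Lemma \ref{a11} is not actually needed here -- one bypasses the enumeration of reflections producing negative pairings on $x_m\lambda$, using only the simple-root nonnegativity of positive real roots together with the sign flip of the pairing under a second application of $r_i$.
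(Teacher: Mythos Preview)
Your argument is correct, and for the lower bound it coincides with the paper's: you produce the one-step chain $x_m\lambda > r_i x_m\lambda = x_{m-1}\lambda$ by checking $\langle x_m\lambda,\alpha_i^\vee\rangle = -p_m < 0$ directly from Lemma~\ref{a10} (the paper instead quotes Lemma~\ref{a11} for the sign, but this is cosmetic).

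Where you genuinely diverge is in the upper bound $\Rdist(x_m\lambda,x_{m-1}\lambda)\le 1$. The paper does not argue with root supports at all; it invokes Littelmann's general lemma \cite[\S4, Lemma~4.1]{o5}, which says that if $\langle x_{m-1}\lambda,\alpha_i^\vee\rangle>0$ then $\Rdist(r_i x_m\lambda,\,x_{m-1}\lambda)=\Rdist(x_m\lambda,\,x_{m-1}\lambda)-1$, and since the left side is $\Rdist(x_{m-1}\lambda,x_{m-1}\lambda)=0$ the conclusion is immediate. Your ``rigid gap'' argument is a legitimate and more self-contained alternative: telescoping any chain expresses $x_{m-1}\lambda - x_m\lambda = p_m\alpha_i$ as a $\BZ_{>0}$-combination of positive real roots, and the explicit description of $\Delta_{\Rre}^+$ (the Remark after (\ref{eq:eq08}), or the general fact that the only positive real root supported on a single simple root is that simple root) forces every $\beta_k=\alpha_i$; the sign flip $\langle r_i\mu_0,\alpha_i^\vee\rangle = -\langle\mu_0,\alpha_i^\vee\rangle>0$ then kills any chain of length $\ge 2$. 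The trade-off is that the paper's route is rank-independent and reusable, whereas yours avoids the external citation and is entirely elementary but leans on the rank-$2$ observation that positive real roots other than $\alpha_1,\alpha_2$ have full support. Either is perfectly adequate here.
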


\begin{proof}
We give a proof only for part (1); the proof for part (2) is similar.
We see from Lemma \ref{a11} that $\langle x_m\lambda, \ \alpha_i^{\vee}\rangle < 0$. Therefore, we obtain that $x_m\lambda > r_ix_m\lambda = x_{m-1}\lambda$. Since  $\langle x_{m-1}\lambda, \ \alpha_i^{\vee}\rangle > 0$, we see by \cite[\S4 Lemma 4.1]{o5}     that $\Rdist(r_ix_m\lambda, x_{m-1}\lambda) = \Rdist(x_m\lambda, x_{m-1}\lambda) - 1$. Since $\Rdist(r_i x_m\lambda, x_{m-1}\lambda) = \Rdist(x_{m-1}\lambda, x_{m-1}\lambda) = 0$,  we obtain $\Rdist(x_m\lambda, x_{m-1}\lambda) = 1$, as desired. 
\end{proof}


\begin{prop}\label{n19}
The Hasse diagram of $W\lambda$ is 
$$\cdots \ \xleftarrow{\alpha_1} \ x_2\lambda \ \xleftarrow{\alpha_2} \ x_1\lambda \ \xleftarrow{\alpha_1} \ x_0\lambda = \lambda = y_0\lambda \ \xleftarrow{\alpha_2} \ y_1\lambda \ \xleftarrow{\alpha_1} \ y_2\lambda \ \xleftarrow{\alpha_1} \ \cdots.$$
\end{prop}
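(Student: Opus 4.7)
The plan is to use Lemma \ref{a13} twice: first to exhibit the displayed edges directly, and second to assemble them into a single chain through $W\lambda$ whose length bounds $\Rdist$ below and rules out any other edges. Recall from (\ref{eq:eq01})--(\ref{eq:eq02}) that $W\lambda = \{x_m\lambda, y_m\lambda : m \geq 0\}$ with $x_0\lambda = y_0\lambda = \lambda$. By Lemma \ref{a13}(1), for each $m \geq 1$ one has $x_m\lambda > x_{m-1}\lambda$ with $\Rdist = 1$, realized by reflection in $\alpha_{i_m}$ ($i_m = 2$ if $m$ is even and $1$ if $m$ is odd); Lemma \ref{a13}(2) gives the analogous statement for the $y$-sequence. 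These are exactly the arrows drawn in the Proposition. Concatenating the two halves at $\lambda$ produces, step-by-step in the sense of Definition \ref{a4}, a single valid chain
$$\cdots \ > \ x_m\lambda \ > \ \cdots \ > \ x_1\lambda \ > \ \lambda \ > \ y_1\lambda \ > \ \cdots \ > \ y_m\lambda \ > \ \cdots. \qquad (\star)$$
Truncating $(\star)$ between any two of its elements $\mu > \nu$ exhibits a chain of length equal to their separation along $(\star)$, so $\Rdist(\mu,\nu)$ is bounded below by that separation.

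For the converse, let $\nu \xrightarrow{\beta} \mu$ be any Hasse edge, so $\mu > \nu$ in $W\lambda$, $\Rdist(\mu,\nu) = 1$, and $\nu = r_\beta(\mu)$. I split on $\mu$. If $\mu = x_m\lambda$ with $m \geq 1$, the only candidates for $\nu$ are the elements of $W\lambda$ sitting strictly below $\mu$ on $(\star)$, namely $x_{m'}\lambda$ ($0 \leq m' < m$) and $y_{m'}\lambda$ ($m' \geq 1$). In the first case the separation bound forces $m' = m-1$, whereupon Remark \ref{a9} together with Lemma \ref{a13}(1) identifies $\beta = \alpha_{i_m}$ and recovers the displayed edge. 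In the second case the truncation of $(\star)$ yields a chain of length $m + m' \geq 2$, contradicting $\Rdist = 1$. The case $\mu = y_m\lambda$ with $m \geq 1$ is symmetric via Lemma \ref{a13}(2); the case $\mu = \lambda$ forces $\nu = y_{m'}\lambda$ with $m' \geq 1$, and the same bound reduces to $m' = 1$ and the displayed edge $\lambda \xleftarrow{\alpha_2} y_1\lambda$.

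The argument is essentially bookkeeping once $(\star)$ is in place; the only substantive point is the implicit use of antisymmetry of $\geq$ on $W\lambda$ to justify that any $\nu$ with $\mu > \nu$ must sit strictly below $\mu$ on $(\star)$ and not somewhere above it. This is standard for the Bruhat-type order underlying Definition \ref{a4}, but in the rank $2$ hyperbolic setting it can in any case be read off from Lemma \ref{a10}, since the elements of $(\star)$ are pairwise distinct and lie in visibly incompatible cones of $\Fh^{\ast}$ on either side of $\lambda$.
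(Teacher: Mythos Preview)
Your argument is correct and follows a somewhat different route from the paper's proof. Both proofs rely on Lemma \ref{a13} to exhibit the displayed edges and to manufacture long chains that force $\Rdist > 1$ for any would-be competing edge. The difference is in how the competing edges are ruled out. The paper first invokes Lemmas \ref{a11} and \ref{a12} to list, for a given $\mu = x_m\lambda$ (or $y_m\lambda$), all positive real roots $\beta$ with $\langle \mu, \beta^\vee\rangle < 0$; for each such $\beta$ it then computes $r_\beta\mu$ explicitly and reads off a chain of length $\geq 2$ unless $\beta$ is simple. You instead assemble the global chain $(\star)$ once, observe that antisymmetry of $\geq$ forces any $\nu < \mu$ to lie strictly below $\mu$ on $(\star)$, and then use the separation along $(\star)$ as a uniform lower bound on $\Rdist(\mu,\nu)$. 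This lets you bypass Lemmas \ref{a11} and \ref{a12} entirely, at the cost of appealing to antisymmetry of the order in Definition \ref{a4} (which is indeed standard: each step $\mu_{k-1} \to r_{\beta_k}\mu_{k-1}$ with $\langle \mu_{k-1},\beta_k^\vee\rangle < 0$ adds a positive multiple of a positive root, so $\mu \geq \nu$ and $\nu \geq \mu$ force $\mu = \nu$). Your approach is slightly more economical; the paper's is more explicit about which reflections can occur.
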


\begin{proof}
Let $\mu, \nu \in W\lambda$ be such that $\mu > \nu$ with $\Rdist(\mu, \nu) = 1$, and let $\beta \in \Delta_{\text{re}}^+$ be the (unique) positive real root such that $\nu = r_{\beta}\mu$; by Lemma \ref{a13}, it suffices to show that $\beta = \alpha_1$ or $\alpha_2$. By Lemma \ref{a11}, if $\mu = x_m\lambda$ and $m$ is even, then $\beta = x_l(\alpha_2)$ or $y_{l+1}(\alpha_1)$ for some $l \in \BZ_{\text{even} \geq 0}.$ Assume that $\beta = x_l(\alpha_2)$ for some $l \in \BZ_{\text{even} \geq 0}$; note that $r_{\beta} = (r_2r_1)^{\frac{l}{2}} r_2 (r_1r_2)^{\frac{l}{2}}$. We see from Lemma \ref{a13} that there exist a directed path 
$$\mu = x_m\lambda \ \xleftarrow{\alpha_2} \ x_{m-1}\lambda \ \xleftarrow{\alpha_1} \ \cdots \ \xleftarrow{\alpha_2} r_\beta\mu = \nu$$
of length $2l+1$ from $\mu$ to $\nu$ in the Hasse diagram of $W\lambda$. Because $\Rdist(\mu, \nu) = 1$ by assumption, we obtain $l = 0$,  and hence $\beta = \alpha_2$.
Assume that $\beta = y_{l+1}(\alpha_1)$ for some $l \in \BZ_{\text{even} \geq 0}$; note that $r_2(r_1r_2)^{\frac{l}{2}}r_1(r_2r_1)^{\frac{l}{2}}r_2$.
By the same reasoning as above,  there exists a direct path of length $2l+3 > 1$ from $\mu$ to $\nu$ in  the Hasse diagram of $W\lambda$. However,  this contradicts the assumption that $\Rdist(\mu, \nu) = 1$.
Similarly, we can show that if $\mu = x_m\lambda$ and $m$ is odd, then $\beta = \alpha_1$. Also, we can show the assertion for the case that $\mu = y_m\lambda$ in exactly the same way as above. This completes the proof of the proposition. 
\end{proof}

\begin{lem}\label{n26}
For any rational number $0 < \sigma < 1$ and any $\mu, \nu \in W\lambda$ such that $\mu > \nu$, there does not exist a $\sigma$-chain $\mu = \mu_0 > \cdots > \mu_r = \nu$ for $(\mu, \nu)$ such that $\mu_k = \lambda$ for some $0 \leq k \leq r$.
\end{lem}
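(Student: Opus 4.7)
The plan is to exploit Proposition \ref{n19}, which tells us that the entire Hasse diagram of $W\lambda$ is a single bi-infinite chain, and that the only two edges adjacent to $\lambda$ are the cover $x_{1}\lambda \xrightarrow{\alpha_{1}} \lambda$ going down from $x_{1}\lambda$ to $\lambda$, and the cover $\lambda \xrightarrow{\alpha_{2}} y_{1}\lambda$ going down from $\lambda$ to $y_{1}\lambda$. Since a $\sigma$-chain is, by Definition \ref{a5}, a sequence of strict covers ($\Rdist$ $=1$) moving strictly downward in the order on $W\lambda$, any $\sigma$-chain is just a consecutive subpath of the Hasse diagram.

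Suppose for contradiction that $\mu_{k}=\lambda$ for some $0\leq k\leq r$. Since $r\geq 1$, at least one of the two inequalities $k>0$ or $k<r$ holds. I would first treat the case $k>0$: the element $\mu_{k-1}$ must be the unique cover of $\lambda$ in $W\lambda$, namely $\mu_{k-1}=x_{1}\lambda$, and by Lemma \ref{a13} the unique positive real root $\beta_{k}$ satisfying $\lambda = r_{\beta_{k}}(x_{1}\lambda)$ is $\beta_{k}=\alpha_{1}$. Then, using $x_{1}\lambda = -p_{1}\Lambda_{1}+p_{2}\Lambda_{2}$ from Lemma \ref{a10}, a direct computation gives $\langle x_{1}\lambda,\alpha_{1}^{\vee}\rangle = -p_{1} = -1$. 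The $\sigma$-chain condition therefore requires $\sigma\langle x_{1}\lambda,\alpha_{1}^{\vee}\rangle = -\sigma \in \BZ_{<0}$, which is impossible for $0<\sigma<1$.

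The remaining case $k<r$ is entirely symmetric: $\mu_{k+1}$ must equal the unique element covered by $\lambda$, namely $y_{1}\lambda$, and $\beta_{k+1}=\alpha_{2}$ by the same reasoning. A direct computation from $\lambda = \Lambda_{1}-\Lambda_{2}$ gives $\langle \lambda,\alpha_{2}^{\vee}\rangle = -1$, so the $\sigma$-chain condition again becomes $-\sigma\in\BZ_{<0}$, a contradiction.

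Since both cases lead to a contradiction, no such $\sigma$-chain exists, proving the lemma. There is no real obstacle here: the substantive input is Proposition \ref{n19}, which already isolates $\lambda$ as a point where the Hasse diagram is ``pinched'' so that only the two adjacent covers can be used, combined with the simple numerical fact that the two relevant pairings both equal $-1$ and hence cannot be rescaled into a negative integer by any $\sigma\in(0,1)\cap\BQ$.
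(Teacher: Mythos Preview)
Your proof is correct and follows essentially the same approach as the paper's: both arguments use Proposition \ref{n19} to identify the unique neighbors of $\lambda$ in the Hasse diagram as $x_{1}\lambda$ above and $y_{1}\lambda$ below, and then observe that the relevant pairings $\langle x_{1}\lambda,\alpha_{1}^{\vee}\rangle$ and $\langle \lambda,\alpha_{2}^{\vee}\rangle$ both equal $-1$, forcing $\sigma\in\BZ$ in contradiction with $0<\sigma<1$. Your write-up is slightly more explicit in invoking Lemma \ref{a10} and $p_{1}=1$, but the content is identical.
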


\begin{proof}
Suppose that $\mu_k = \lambda$ for some $0 \leq k \leq r$. Note that $r \geq 1$ since $\mu > \nu$. If $k < r$ (resp., $k >0$), then it follows from Proposition \ref{n19} that $\mu_{k+1} = r_2\lambda$ (resp., $\mu_{k-1} = r_1\lambda$) since $\Rdist(\mu_k, \mu_{k+1}) = 1$ (resp., $\Rdist(\mu_{k-1}, \mu_k) = 1$) by the assumption of the $\sigma$-chain. Thus, we obtain $\sigma = -\sigma \langle \lambda, \alpha_2^{\vee}\rangle \in \BZ$ (resp., $\sigma = \sigma \langle \lambda, \alpha_1^{\vee}\rangle \in \BZ$), which contradicts the assumption $0 < \sigma < 1$. If $k = 0$ or $k = r$, it is clear that $\sigma = -\sigma \langle \lambda, \alpha_2^{\vee}\rangle \in \BZ$ or $\sigma = -\sigma \langle r_1\lambda, \alpha_1^{\vee}\rangle \in \BZ$ by Proposition \ref{n19}. This also contradicts the assumption. Thus, the lemma has been proved.
\end{proof}

The next proposition follows immediately from Lemma \ref{n26} and the definition of LS paths.

\begin{prop}\label{n22}
Let $\pi = (\nu_1, \ \dots, \ \nu_s \ ;\ \sigma_0, \ \dots, \ \sigma_s) \in \BB(\lambda)$. If $\nu_u = \lambda$ for some $1 \leq u \leq s$, then $s = 1$ and $\pi = (\lambda \ ; \ 0, \ 1)$.
\end{prop}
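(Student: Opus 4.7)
The plan is to reduce Proposition \ref{n22} to Lemma \ref{n26} by a short case analysis on the position $u$ at which $\lambda$ appears in the sequence $\nu_1 > \nu_2 > \cdots > \nu_s$. The content is essentially all packaged in Lemma \ref{n26}; the proof of the proposition is just careful bookkeeping of which $\sigma_k$-chain to look at.

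First I would dispose of the trivial case $s = 1$: then $\sigma_0 = 0$ and $\sigma_1 = 1$ are forced, and the hypothesis $\nu_1 = \lambda$ yields $\pi = (\lambda \,;\, 0, 1)$, which is the desired conclusion. So I may assume $s \geq 2$ and $\nu_u = \lambda$ for some $1 \leq u \leq s$, and aim for a contradiction. The key step is to invoke the definition of an LS path to produce a $\sigma_k$-chain that contains $\lambda$. If $u \leq s-1$, the definition supplies a $\sigma_u$-chain
$$\nu_u = \mu_0 > \mu_1 > \cdots > \mu_r = \nu_{u+1}$$
for $(\nu_u, \nu_{u+1})$, and since $1 \leq u \leq s-1$ we have $0 < \sigma_u < 1$; but $\mu_0 = \nu_u = \lambda$, contradicting Lemma \ref{n26}. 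If instead $u = s$, then since $s \geq 2$ I may apply the definition with $k = s-1$ to obtain a $\sigma_{s-1}$-chain
$$\nu_{s-1} = \mu_0 > \mu_1 > \cdots > \mu_r = \nu_s$$
for $(\nu_{s-1}, \nu_s)$, with $0 < \sigma_{s-1} < 1$ because $s-1 \in \{1, \dots, s-1\}$; now the final term $\mu_r = \nu_s$ equals $\lambda$, again contradicting Lemma \ref{n26}.

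The main obstacle: there is none of real substance, since Lemma \ref{n26} does all of the work. The only point worth double-checking is that in each case the relevant $\sigma_k$ satisfies $0 < \sigma_k < 1$, which is exactly why the case $s = 1$ has to be singled out first (there are no interior $\sigma_k$'s to examine) and why the split into $u \leq s-1$ versus $u = s$ exhausts the possibilities once $s \geq 2$.
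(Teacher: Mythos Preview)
Your argument is correct and is exactly the unpacking the paper has in mind: the paper simply states that Proposition \ref{n22} ``follows immediately from Lemma \ref{n26} and the definition of LS paths,'' and your case split on $u \leq s-1$ versus $u = s$ (after disposing of $s=1$) is precisely how one makes that immediacy explicit.
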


\begin{proof}[Proof of Theorem \ref{a7}]
We show that every $\pi \in \BB(\lambda)$ is connected to $(\lambda; \ 0, 1) \in \BB(\lambda)$ in the crystal graph of $\BB(\lambda)$. Assume first that $\iota(\pi) = x_m\lambda$ for some $m \in \BZ_{\geq 0}$. We show by induction on $m$ that $\pi$ is connected to $(\lambda ; \ 0, \ 1)$. If $m = 0$, then the assertion follows immediately from Proposition \ref{n22}. Assume that $m>0$. Define
$$i: = \begin{cases}
2 &(\text{if $m$ is even}),\\
1 &(\text{if $m$ is odd});
\end{cases}$$
note that $\langle x_m\lambda, \alpha_i^{\vee}\rangle < 0$ and $r_ix_m\lambda = x_{m-1}\lambda$ (see Lemma \ref{a13}). By Lemma \ref{n23}, $\iota(e_i^{\Rmax}\pi) = r_i\iota(\pi) = r_ix_m\lambda= x_{m-1}\lambda$. 
By the induction hypothesis, $e_i^{\Rmax}\pi$ is connected to $(\lambda \ ; \ 0, \ 1)$, and hence so is $\pi$.

Assume next that $\iota(\pi) = y_m\lambda$ for some $m \in \BZ_{\geq 0}$. Since $\kappa(\pi) \leq \iota(\pi)$ by the definition of an LS path, we see by Proposition \ref{n19} that $\kappa(\pi) = y_k\lambda$ for some $k \geq m$. Hence it suffices to show that if $\pi \in \BB(\lambda)$ satisfies that $\kappa(\pi) = y_k\lambda$ for some $k \in \BZ_{\geq 0}$, then $\pi$ is connected to $(\lambda; \ 0, \ 1 )$. If $k = 0$, then the assertion follows immediately from Proposition \ref{n22}. Assume that $k>0$. Define 
$$j: = \begin{cases}
1 &(\text{if $k$ is even}),\\
2 &(\text{if $k$ is odd});
\end{cases}$$ 
note that $\langle y_k\lambda, \alpha_j^{\vee}\rangle > 0$ and $r_j y_k \lambda = y_{k-1}\lambda$. By Lemma \ref{n23}, 
$\kappa(f_j^{\Rmax}\pi) = r_j\kappa(\pi) = r_jy_k\lambda= y_{k-1}\lambda$. 
By the induction hypothesis, $f_j^{\Rmax}\pi$ is connected to $(\lambda \ ; \ 0, \ 1)$, and hence so is $\pi$.
Thus, we have proved Theorem \ref{a7}. 
\end{proof}

\section{Explicit descriptions of the LS paths and the root operators.}



\subsection{Explicit description of the LS paths.}

Throughout this section, we assume that $a, b \neq 1$ in (\ref{eq:eq07}). Recall that the sequences $\{p_m\}_{m \in \BZ_{\geq 0}}$ and $\{q_m\}_{m \in \BZ_{\geq 0}}$ are defined in (\ref{eq:eq05}) and (\ref{eq:eq06}), respectively.



\begin{lem}\label{n15}
For each $k \geq 0$, the numbers $p_k$ and $p_{k+1}$ are relatively prime. Also, the numbers $q_k$ and $q_{k+1}$ are relatively prime. 
\end{lem}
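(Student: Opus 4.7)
The plan is to prove both statements simultaneously by induction on $k$, using the recurrence relations directly. The base case $k = 0$ is immediate since $p_0 = p_1 = 1$ and $q_0 = q_1 = 1$, so $\gcd(p_0, p_1) = \gcd(q_0, q_1) = 1$.

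For the inductive step, suppose $\gcd(p_k, p_{k+1}) = 1$. Set $m = k$ and let $c$ denote the relevant coefficient ($c = b$ if $k$ is even, $c = a$ if $k$ is odd), so that the recurrence (\ref{eq:eq05}) reads $p_{k+2} = c\, p_{k+1} - p_k$, i.e., $p_k = c\, p_{k+1} - p_{k+2}$. Now let $d$ be any positive common divisor of $p_{k+1}$ and $p_{k+2}$. Then $d$ divides $c\, p_{k+1} - p_{k+2} = p_k$, so $d$ is a common divisor of $p_k$ and $p_{k+1}$; by the induction hypothesis, $d = 1$. Hence $\gcd(p_{k+1}, p_{k+2}) = 1$, which closes the induction. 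The argument for the sequence $\{q_m\}$ is identical, with the roles of $a$ and $b$ swapped in the recurrence (\ref{eq:eq06}).

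There is essentially no obstacle here: the statement is a textbook consequence of the three-term linear recurrence with constant term $-1$ on the tail (the coefficient of $p_k$ is $-1$, so no ``extra'' prime can be introduced when passing from $(p_k, p_{k+1})$ to $(p_{k+1}, p_{k+2})$). The only mild care needed is to keep track of the parity of $k$ so that one applies the correct branch of the recurrence, but since the coefficient of $p_k$ (and of $q_k$) is always $-1$ regardless of parity, the inductive step is uniform.
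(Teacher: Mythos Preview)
Your proof is correct and follows essentially the same approach as the paper: both use the recurrence $p_{k+2} = c\,p_{k+1} - p_k$ to show that any common divisor of $p_{k+1}$ and $p_{k+2}$ must also divide $p_k$, reducing to the previous pair. The only cosmetic difference is that the paper phrases this as a minimal-counterexample argument rather than a forward induction, but the mathematical content is identical.
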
 

\begin{proof}
We give a proof only for $p_k$ and $p_{k+1}$; the proof for $q_k$ and $q_{k+1}$ is similar.
Suppose that the assertion is false, and let $m$ be the minimum $k \geq 0$ such that $p_k$ and $p_{k+1}$ have a common divisor greater than $1$. Let $d \in \BZ_{>1}$ be a common divisor of $p_m$ and $p_{m+1}$. Since
$$\begin{cases}
p_{m+1} = b p_m - p_{m-1} &(\text{if $m$ is even}), \\
p_{m+1} = a p_m - p_{m-1} &(\text{if $m$ is odd}),
\end{cases}$$
we can deduce that $p_m$ and $p_{m-1}$ have the same common divisor $d$, which contradicts the minimality of $m$. Thus, we have proved the lemma.
\end{proof}

\begin{thm}\label{n27}
(1) Let $0 < \sigma < 1$ be a rational number, and let $\mu, \nu \in W\lambda$ be such that $\mu > \nu$. If $\mu = \mu_0>\mu_1>\dots >\mu_t = \nu$ is a $\sigma$-chain for $(\mu, \nu)$, then $t = 1$.

\noindent(2) An LS path $\pi$ of shape $\lambda = \Lambda_1 - \Lambda_2$ is either of the form (i) or (ii):

\noindent(i) $(x_{m+s-1}\lambda, \dots, x_{m+1}\lambda, x_m\lambda ; \sigma_0, \sigma_1, \dots, \sigma_s)$, where $m \geq 0, s \geq 1$, and $0 = \sigma_0 < \sigma_1 < \dots < \sigma_s = 1$ satisfy the condition that $p_{m+s-u} \sigma_u \in \BZ$ for $1 \leq u \leq s-1$.

\noindent(ii) $(y_{m-s+1}\lambda, \dots, y_{m-1}\lambda, y_m\lambda ; \delta_0, \delta_1, \dots, \delta_s)$, where $m \geq s-1, s \geq 1$, and $0 = \delta_0 < \delta_1 < \dots < \delta_s = 1$ satisfy the condition that $q_{m-s+u+1} \delta_u \in \BZ$ for $1 \leq u \leq s-1$.
\end{thm}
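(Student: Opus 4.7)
My plan is to derive part (2) from part (1), with (1) being the key technical input. The guiding picture is that by Proposition \ref{n19} the Hasse diagram of $W\lambda$ is a single bi-infinite linear chain through $\lambda$; so any decreasing sequence of length $\geq 2$ with consecutive distances $1$ is simply a segment of that chain. Part (1) then rules out $\sigma$-chains of length $\geq 2$ via coprimality of consecutive $p_m$'s (or $q_m$'s), and part (2) reads off the classification.

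For part (1), I would argue by contradiction, assuming $t \geq 2$. By Lemma \ref{n26}, no $\mu_k$ equals $\lambda$. Since each $\Rdist(\mu_{k-1}, \mu_k) = 1$, the vertices $\mu_0, \mu_1, \dots, \mu_t$ are consecutive on the Hasse diagram of Proposition \ref{n19}, and hence all lie on one side of $\lambda$. Assume (the $y$-side case is symmetric) that $\mu_{k-1} = x_{m-k+1}\lambda$ for some $m$ with $m-k+1 \geq 1$ for all $k$. By Lemma \ref{a13} the positive real root $\beta_k$ with $\mu_k = r_{\beta_k}\mu_{k-1}$ is a simple root $\alpha_i$, and Lemma \ref{a10} then gives $\langle \mu_{k-1}, \beta_k^{\vee}\rangle = -p_{m-k+1}$. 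The $\sigma$-chain condition $\sigma\langle \mu_{k-1}, \beta_k^{\vee}\rangle \in \BZ_{<0}$ therefore forces $\sigma p_{m-k+1} \in \BZ$ for $k = 1, \dots, t$. Applying this for $k=1$ and $k=2$ gives $\sigma p_m, \sigma p_{m-1} \in \BZ$; since $\gcd(p_m, p_{m-1}) = 1$ by Lemma \ref{n15}, $\sigma$ itself is an integer, contradicting $0 < \sigma < 1$. The $y$-side case uses Lemmas \ref{a12} and \ref{n15} identically with the $q_m$'s.

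For part (2), let $\pi = (\nu_1, \dots, \nu_s; \sigma_0, \dots, \sigma_s) \in \BB(\lambda)$. If $\nu_u = \lambda$ for some $u$, then Proposition \ref{n22} gives $\pi = (\lambda; 0, 1)$, which is form (i) with $(m, s) = (0, 1)$. Otherwise, part (1) forces each $\sigma_u$-chain for $(\nu_u, \nu_{u+1})$ to have length one, so $\nu_u$ and $\nu_{u+1}$ are adjacent in the linear Hasse diagram; hence all $\nu_u$ lie on the same side of $\lambda$. On the $x$-side, writing $\nu_s = x_m\lambda$ (with $m \geq 1$ when $s \geq 2$, and $m \geq 0$ when $s = 1$) yields $\nu_u = x_{m+s-u}\lambda$ for every $u$, i.e.\ form (i); the condition $p_{m+s-u}\sigma_u \in \BZ$ is exactly the $\sigma_u$-chain condition, computed through Lemma \ref{a10} as in part (1). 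The $y$-side yields form (ii) in the same manner.

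The only genuine arithmetic input is the coprimality from Lemma \ref{n15} in part (1); once that is granted, everything else is bookkeeping on the linear Hasse diagram. The subtlety to watch is the boundary behavior near $\lambda$, cleanly absorbed by Proposition \ref{n22} and Lemma \ref{n26}: the trivial LS path $(\lambda; 0, 1)$ is the unique one touching $\lambda$, and all non-trivial LS paths stay entirely on one side of $\lambda$, which is what makes the clean split into forms (i) and (ii) possible.
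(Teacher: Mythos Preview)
Your argument is correct and follows essentially the same route as the paper: for (1) both identify the $\sigma$-chain as a segment of the linear Hasse diagram (Proposition \ref{n19}), compute two consecutive pairings as $-p_m,\,-p_{m-1}$ via Lemma \ref{a10}, and invoke the coprimality Lemma \ref{n15}; for (2) both use (1) to force adjacency of consecutive $\nu_u$'s, handle the boundary at $\lambda$ through Proposition \ref{n22}/Lemma \ref{n26}, and read off the integrality conditions. The only slip is a citation: on the $y$-side of (1) the pairing values $-q_m$ come from (\ref{eq:eq04}) in Lemma \ref{a10}, not from Lemma \ref{a12}.
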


\begin{proof}
(1) Suppose that $t \geq 2$. Assume first that $\mu_0 = x_m\lambda$; by Lemma \ref{n26}, we have $m \geq 3$. Since $\Rdist(\mu_0, \mu_1) = \Rdist(\mu_1, \mu_2) = 1$ by the definition of a $\sigma$-chain, we see by Proposition \ref{n19} that $\mu_1 = x_{m-1}\lambda$ and $\mu_2 = x_{m-2}\lambda$. Take $i, j \in \{1, 2\}$ such that $\mu_1 = r_i\mu_0$ and $\mu_2 = r_j\mu_1$. Then, by Lemma \ref{a10},
$$\langle \mu_0, \alpha_i^{\vee}\rangle = \langle x_m\lambda, \alpha_i^{\vee}\rangle = -p_m, \ \  \langle \mu_1, \alpha_j^{\vee}\rangle = \langle x_{m-1}\lambda, \alpha_j^{\vee}\rangle = -p_{m-1}.$$
Since $-p_m$ and $-p_{m-1}$ are relatively prime (see Lemma \ref{n15}), there does not exist a $0 < \sigma < 1$ satisfying the condition that both $-\sigma p_m$ and $-\sigma p_{m-1}$ are integers. This contradicts our assumption that $\mu = \mu_0>\mu_1>\dots >\mu_t = \nu$ is  a $\sigma$-chain for $(\mu, \nu)$. Similarly, we can get a contradiction also in the case of $\mu_0 = y_m\lambda$ for some $m \in \BZ_{\geq 1}$. Thus, we have proved (1).

(2) Let $\pi = (\nu_1, \dots, \nu_s ; \sigma_0, \dots, \sigma_s) \in \BB(\lambda)$. Assume first that $\nu_s = x_m\lambda$ for some $m \geq 0$. Since $\nu_1>\nu_2>\cdots>\nu_s = x_m\lambda$ by the definition of an LS path, we see by Proposition \ref{n19} that 
$$\nu_1 = x_{k_1}\lambda, \ \nu_2 = x_{k_2}\lambda, \ \dots, \ \nu_{s-1} = x_{k_{s-1}}\lambda$$
for some $k_1>k_2>\cdots >k_{s-1}>m$. Here we recall that there exists a $\sigma_{s-1}$-chain for $(\nu_{s-1}, \nu_s) = (x_{k_{s-1}}\lambda, x_m\lambda)$ by the definition of an LS path.
By (1), we see that the length of this $\sigma_{s-1}$-chain is equal to $1$, which implies that $\Rdist(\nu_{s-1}, \nu_s) = \Rdist(x_{k_s-1}\lambda, x_m\lambda) = 1$. Hence it follows from Proposition \ref{n19} that $k_{s-1} = m+1$. Take $i \in I$ such that $x_m\lambda = r_ix_{m+1}\lambda$. Then, by the definition of a $\sigma_{s-1}$-chain, we have $\sigma_{s-1}\langle x_{m+1}\lambda, \alpha_i^{\vee}\rangle \in \BZ$. Since $\langle x_{m+1}\lambda, \alpha_i^{\vee}\rangle = -p_{m+1}$ by (\ref{eq:eq03}), we obtain $p_{m+1}\sigma_{s-1} \in \BZ$. By repeating this argument, we deduce that $k_u = m+s-u$ and $p_{m+s-u} \sigma_u \in \BZ$ for every $1 \leq u \leq s-1$. Hence, $\pi$ is of the form (i).

Assume next that $\nu_s = y_m\lambda$ for some $m \geq 0$. Suppose that ($s \geq 2$ and) there exists $1 \leq u \leq s-1$ such that $\nu_{u+1} = y_k\lambda$ for some $k \geq 0$, but $\nu_u = x_l\lambda$ for some $l \geq 0$. By the definition of an LS path, there exists a $\delta_u$-chain for $(\nu_u, \nu_{u+1})$. Then, by (1), the length of this $\delta_u$-chain is equal to $1$, which implies that $\Rdist(\nu_u, \nu_{u+1}) = \Rdist(x_l\lambda, y_k\lambda) = 1$. By the Hasse diagram in Proposition \ref{n19}, we see that $(l, k) = (1, 0)$ or $(0, 1)$ Since $x_0\lambda = y_0\lambda = \lambda$, it follows form Proposition \ref{n22} that $s = 1$, which contradicts $s \geq 2$. Therefore, we conclude that
$$\nu_1 = y_{k_1}\lambda, \ \nu_2 = y_{k_2}\lambda, \ \dots, \ \nu_s = y_{k_s}\lambda = y_m\lambda,$$
where $0 \leq k_1 < k_2 < \cdots < k_{s-1} < k_s = m$. By the same argument as above, we deduce that $k_u = m-s+u$ and $q_{m-s+u+1}\delta_u \in \BZ$. Hence, $\pi$ is of the form (ii). This completes the proof of Theorem \ref{n27}. 
\end{proof}


\subsection{Explicit description of the root operators.}

As an application of Theorem \ref{n27}, we give an explicit description of the root operators $e_i$ and $f_i, i = 1, 2.$ First, let $\pi \in \BB(\lambda)$ be of the form (i) in Theorem \ref{n27}(2).
We set 
\begin{equation*}
\begin{split}
&C_u^{(1)} := \sum_{k=m+s-u}^{m+s-1}(\sigma_{m+s-k} - \sigma_{m+s-k-1})(-1)^{k}p_{k+\xi_k},\\
&C_u^{(2)} := \sum_{k=m+s-u}^{m+s-1}(\sigma_{m+s-k} - \sigma_{m+s-k-1})(-1)^{k+1}p_{k+\xi_{k+1}},\\
\end{split}
\end{equation*}
where $\{p_m\}_{m \in \BZ_{\geq 0}}$ is define as (\ref{eq:eq05}), and
$$\xi_k := \begin{cases}
1 &\text{if $k$ is even},\\
0 &\text{if $k$ is odd}, 
\end{cases}\ \text{for} \ k \in \BZ_{\geq 0}.$$
Then, 
\begin{equation*}
\Rwt(\pi) = C_s^{(1)} \Lambda_1 + C_s^{(2)} \Lambda_2.
\end{equation*}
Note that $\pm \langle x_u\lambda, \alpha_i^{\vee} \rangle > 0$ if and only if $\mp \langle x_u\lambda, \alpha_i^{\vee} \rangle >0$ for each $u \in \BZ_{\geq 0}$. Thus we see (cf. (\ref{eq:eq105})) that

\begin{equation*}
m_i^{\pi} = \Rmin\{C_u^{(i)} \mid 0 \leq u \leq s\}.
\end{equation*}
Let us give an explicit description of $f_i\pi$. We set

$$u_0 := \Rmax\{0 \leq u \leq s \mid C_u^{(i)} = m_i^{\pi}\};$$
if $u_0 = s$, then $f_i\pi = \mathbf{0}$. Assume that $0 \leq u_0 \leq s-1$; we see that $\sigma_{u_0}$ is equal to $t_0$ in (\ref{eq:eq109}). By fact (\ref{eq:eq110}), we deduce that $t_1$ in (\ref{eq:eq109}) is equal to 
\begin{equation*}
\begin{split}
\sigma_{u_0}' := \begin{cases}
\sigma_{u_0} + \displaystyle\frac{1}{p_{m+s-u_0-1+\xi_{m+s-u_0-1}}} &\text{if $i = 1$,} \vspace{4mm}\\
\sigma_{u_0} + \displaystyle\frac{1}{p_{m+s-u_0-1+\xi_{m+s-u_0}}} &\text{if $i = 2$, }
\end{cases}
\end{split}
\end{equation*}
which satisfies $\sigma_{u_0} < \sigma_{u_0}' \leq \sigma_{u_0+1}$; notice that if $\sigma_{u_0}' = \sigma_{u_0 + 1}$, then $u_0 = s-1$, and hence $\sigma_{u_0}' = \sigma_s = 1.$
We have 
\begin{equation}\label{100}
f_i\pi = \begin{cases}
(x_{m+s}\lambda, x_{m+s-1}\lambda, \dots,  x_m\lambda ; \sigma_0, \sigma_{0}', \sigma_1, \dots, \sigma_s) &\text{if $u_0 = 0$ and $\sigma_{u_0}' < \sigma_{u_0 + 1},$}\\
(r_ix_m\lambda; 0, 1) &\text{if $u_0 = 0$ and $\sigma_{u_0}' = \sigma_{u_0 + 1},$}\\
(x_{m+s-1}\lambda, \dots,  x_m\lambda ; \sigma_0, \dots, \sigma_{u_0-1}, \sigma_{u_0}', \sigma_{u_0 + 1}, \dots, \sigma_s) &\text{if $u_0 \geq 1$ and $\sigma_{u_0}' < \sigma_{u_0 + 1},$}\\
(x_{m+s-1}\lambda, \dots,  x_{m-1}\lambda ; \sigma_0, \dots, \sigma_{s-2}, \sigma_s) &\text{if $u_0 \geq 1$ and $\sigma_{u_0}' = \sigma_{u_0 + 1}.$}
\end{cases}
\end{equation}

Similary, we give an explicit description of $e_i\pi$ as follows. We set
$$u_1 := \Rmin\{0 \leq u \leq s \mid C_u^{(i)} = m_i^{\pi}\};$$
if $u_1 = 0$, then $e_i\pi = \mathbf{0}$. Assume that $1 \leq u_1 \leq s$; we see that $\sigma_{u_1}$ is equal to $t_1$ in (\ref{eq:eq107}). By fact (\ref{eq:eq108}), we deduce that $t_1$ in (\ref{eq:eq107}) is equal to 
\begin{equation*}
\sigma_{u_1}' := \begin{cases}
\sigma_{u_1} - \displaystyle\frac{1}{p_{m+s-u_1+\xi_{m+s-u_1}}} &\text{if $i = 1$, }\vspace{4mm}\\
\sigma_{u_1} - \displaystyle\frac{1}{p_{m+s-u_1+\xi_{m+s-u_1+1}}} &\text{if $i = 2$, }
\end{cases}
\end{equation*}
which satisfies $\sigma_{u_1-1} \leq \sigma_{u_1}' \leq \sigma_{u_1}$; notice that if $\sigma_{u_1}' = \sigma_{u_1 -1}$, then $u_1 = 1$ and hence $\sigma_{u_1}' = \sigma_0 = 0$.
We have 

\begin{equation}\label{101}
e_i\pi = \begin{cases}
(x_{m+s-1}\lambda, \dots,  x_m\lambda, x_{m-1}\lambda ; \sigma_0, \dots, \sigma_{s-1}, \sigma_{s}', \sigma_s) &\text{if $u_1 = s$ and $\sigma_{u_1 -1} < \sigma_{u_1}'$},\\
(r_ix_m\lambda; 0, 1) &\text{if $u_1 = s$ and $\sigma_{u_1-1} = \sigma_{u_1}'$},\\
(x_{m+s-1}\lambda, \dots,  x_m\lambda ; \sigma_0, \dots, \sigma_{u_1-1}, \sigma_{u_1}', \sigma_{u_1 + 1}, \dots, \sigma_s) &\text{if $u_1 \leq s-1$, $\sigma_{u_1-1} < \sigma_{u_1}' ,$}\\
(x_{m+s-2}\lambda, \dots,  x_{m}\lambda ; \sigma_0, \sigma_{2}, \dots,  \sigma_s) &\text{if $u_1 \leq s-1$, $\sigma_{u_1 - 1} = \sigma_{u_1}'.$}
\end{cases}
\end{equation}
Note that $x_{-1}\lambda = y_1\lambda.$

\begin{ex}
Let
$$\pi = (r_2r_1r_2r_1\lambda, r_1r_2r_1\lambda, r_2r_1\lambda; 0, \frac{1}{p_4}, \frac{2}{p_3}, 1) \in \BB(\lambda); $$
note that $m = 2$ and $s = 3$ in Theorem \ref{n27} (i). 
Let us compute $f_i\pi, i = 1, 2,$ using formula (\ref{100}).
If $i = 1$, then we have $u_0 = 2, \sigma_{u_0}' = \frac{3}{p_3}$ if $a = 2$ and $u_0 = 0, \sigma_{u_0}' = \frac{1}{p_5}$ if $a \geq 3$. (Note that $\frac{3}{p_3} = 1$ if $b = 3$.) Thus,
$$f_1\pi = \begin{cases}
(r_2r_1r_2r_1\lambda, r_1r_2r_1\lambda; 0, \displaystyle\frac{1}{p_4}, 1) &\text{if $a = 2, b = 3$,} \vspace{4mm}\\
(r_2r_1r_2r_1\lambda, r_1r_2r_1\lambda, r_2r_1\lambda; 0, \displaystyle\frac{1}{p_4}, \displaystyle\frac{3}{p_3}, 1) &\text{if $a = 2, b > 3$,} \vspace{4mm}\\
(r_1r_2r_1r_2r_1\lambda, r_2r_1r_2r_1\lambda, r_1r_2r_1\lambda, r_2r_1\lambda; 0, \displaystyle \frac{1}{p_5},
\displaystyle\frac{1}{p_4}, \displaystyle\frac{2}{p_3}, 1) &\text{if $a \geq 3$;}
\end{cases}$$
remark that if $a = 2$, then $b >3$. If $i = 2$, then we have $u_0 = 1, \sigma_{u_0}' = \frac{2}{p_4}$. Thus,
$$f_2\pi = (r_2r_1r_2r_1\lambda, r_1r_2r_1\lambda, r_2r_1\lambda; 0, \frac{2}{p_4}, \frac{2}{p_3}, 1).$$
\end{ex}

Next, let $\pi \in \BB(\lambda)$ be of the form (ii) in Theorem \ref{n27} (2). By a similar argument to above, we have the following explicit descriptions of $f_i\pi$ and $e_i\pi$.
We set
\begin{equation*}
\begin{split}
&D_v^{(1)} := \sum_{k=m-s+1}^{m-s+v}(\delta_{k-m+s} - \delta_{k-m+s-1})(-1)^{k}q_{k+\xi_{k+1}},\\
&D_v^{(2)} := \sum_{k=m-s+1}^{m-s+v}(\delta_{k-m+s} - \delta_{k-m+s-1}(-1)^{k+1}q_{k+\xi_{k}},
\end{split}
\end{equation*}
where $\{q_m\}_{m \in \BZ_{\geq 0}}$ is define as (\ref{eq:eq06}).
Then
\begin{equation*}
\Rwt(\pi) = D_s^{(1)} \Lambda_1 + D_s^{(2)} \Lambda_2.
\end{equation*}
We have
\begin{equation*}
m_i^{\pi} = \Rmin\{D_v^{(i)} \mid 0 \leq v \leq s\}.
\end{equation*}
Let us give an explicit description of $f_i\pi$. We set

$$v_0 := \Rmax\{0 \leq v \leq s \mid D_v^{(i)} = m_i^{\pi}\};$$
if $v_0 = s$, then $f_i\pi = \mathbf{0}$. Assume that $0 \leq v_0 \leq s-1$. we set
$$\delta_{v_0}' := \begin{cases}
\delta_{v_0} + \displaystyle\frac{1}{p_{m-s+v_0+1+\xi_{m-s+v_0+2}}} &\text{if $i = 1$, }\vspace{4mm}\\
\delta_{v_0} + \displaystyle\frac{1}{p_{m-s+v_0+1+\xi_{m-s+v_0+1}}} &\text{if $i = 2$. }
\end{cases}$$
We have
\begin{equation}\label{103}
f_i\pi = \begin{cases}
(y_{m-s}\lambda, y_{m-s+1}\lambda, \dots,  y_m\lambda ; \delta_0, \delta_{0}', \delta_1, \dots, \delta_s) &\text{if $v_0 = 0$ and $\delta_{v_0}' < \delta_{v_0 + 1},$}\\
(r_i y_m\lambda; 0, 1) &\text{if $v_0 = 0$ and $\delta_{v_0}' = \delta_{v_0 + 1},$}\\
(y_{m-s+1}\lambda, \dots,  y_m\lambda ; \delta_0, \dots, \delta_{v_0-1},\delta_{v_0}', \delta_{v_0 + 1}, \dots, \delta_s) &\text{if $v_0 \geq 1$ and $\delta_{v_0}' < \delta_{v_0 + 1},$}\\
(y_{m-s+1}\lambda, \dots,  y_{m-1}\lambda ; \delta_0, \dots, \delta_{s-2}, \delta_s) &\text{if $v_0 \geq 1$ and $\delta_{u_0}' = \delta_{v_0 + 1}.$}
\end{cases}
\end{equation}
Note that $y_{-1}\lambda = x_1\lambda.$

Similarly, we give an explicit description of $e_i\pi$ as follows. We set
$$v_1 := \Rmin\{0 \leq v \leq s \mid D_v^{(i)} = m_i^{\pi}\};$$
if $v_1 = 0$, then $e_i\pi = \mathbf{0}$. Assume that $1 \leq v_1 \leq s$. We set 
\begin{equation*}
\delta_{v_1}' := \begin{cases}
\delta_{v_1} - \displaystyle\frac{1}{q_{m-s+v_1+\xi_{m-s+v_1+1}}} &\text{if $i = 1$, }\vspace{4mm}\\
\delta_{v_1} - \displaystyle\frac{1}{q_{m-s+v_1+\xi_{m-s+v_1}}} &\text{if $i = 2$. }
\end{cases}
\end{equation*}
We have 

\begin{equation}\label{101}
e_i\pi = \begin{cases}
(y_{m-s+1}\lambda, \dots,  y_m\lambda, y_{m+1}\lambda ; \delta_0, \dots, \delta_{s-1}, \delta_{s}', \delta_s) &\text{if $v_1 = s$ and $\delta_{v_1 -1} < \delta_{v_1}'$},\\
(r_i y_m\lambda; 0, 1) &\text{if $v_1 = s$ and $\delta_{v_1-1} = \delta_{v_1}',$}\\
(y_{m-s+1}\lambda, \dots,  y_m\lambda ; \delta_0, \dots, \delta_{v_1-1}, \delta_{v_1}', \delta_{v_1 + 1}, \dots, \delta_s) &\text{if $v_1 \leq s-1$, $\delta_{v_1-1} < \delta_{v_1}' ,$}\\
(y_{m-s+2}\lambda, \dots,  y_{m}\lambda ; \delta_0, \delta_{2}, \dots,  \delta_s) &\text{if $v_1 \leq s-1$, $\delta_{v_1 - 1} = \delta_{v_1}'.$}
\end{cases}
\end{equation}

\noindent\textbf{\Large{\ac.}}

The author is grateful to Professor Daisuke Sagaki, her supervisor, for suggesting the topic treated in this paper and lending his expertise especially through the study of the connectedness of LS paths as a crystal graph. Also, she thanks the referee for giving her valuable comments.

\end{document}